\newtheorem{theorem}{Theorem}
\newtheorem{lemma}{Lemma}
\newtheorem{proposition}{Proposition}
\newtheorem{corollary}{Corollary}
\theoremstyle{definition}
\newtheorem{remark}{Remark}
\newtheorem{example}{Example}
\DeclareMathOperator{\tr}{tr}
\DeclareMathOperator{\Var}{\mathsf{Var}}
\DeclareMathOperator{\E}{\mathbb{E}}
\DeclareMathOperator{\Cov}{\mathsf{Cov}}
\newcommand{\vect}[1]{\mathbf{#1}} 
\DeclareMathOperator{\Rho}{\boldsymbol{\rho}}
\DeclareMathOperator{\alphaBold}{\boldsymbol{\alpha}}
\DeclareMathOperator{\thetaBold}{\boldsymbol{\theta}}
\DeclareMathOperator*{\argmax}{argmax}
\def\Sx{\Sigma_{\vect{X}}}
\def\Sxsq{\Sx^{\frac{1}{2}}}
\def\Sxsqm{\Sx^{-\frac{1}{2}}}
\def\fisher{\mathrm{I}}
\def\th{\mathrm{th}}
\def\thq{\mathrm{th}\text{-}\mathrm{q}}
\title{Distributed Estimation of Gaussian Correlations}
\author{Uri Hadar and Ofer Shayevitz\thanks{The authors are with the Department of EE--Systems, Tel Aviv University, Tel Aviv, Israel \{emails: urihadar@mail.tau.ac.il, ofersha@eng.tau.ac.il\}. This work was supported by an ERC grant no. 639573. The first author would like to acknowledge the generous support of The Yitzhak and Chaya Weinstein Research Institute for Signal Processing.}}
\begin{document}
\maketitle

\begin{abstract}
 	We study a distributed estimation problem in which two remotely located parties, Alice and Bob, observe an unlimited number of i.i.d. samples corresponding to two different parts of a random vector. Alice can send $k$ bits on average to Bob, who in turn wants to estimate the cross-correlation matrix between the two parts of the vector. In the case where the parties observe jointly Gaussian scalar random variables with an unknown correlation $\rho$, we obtain two constructive and simple unbiased estimators attaining a variance of $(1-\rho^2)/(2k\ln 2)$, which coincides with a known but non-constructive random coding result of Zhang and Berger. We extend our approach to the vector Gaussian case, which has not been treated before, and construct an estimator that is uniformly better than the scalar estimator applied separately to each of the correlations. We then show that the Gaussian performance can essentially be attained even when the distribution is completely unknown. This in particular implies that in the general problem of distributed correlation estimation, the variance can decay at least as $O(1/k)$ with the number of transmitted bits. This behavior, however, is not tight: we give an example of a rich family of distributions for which local samples reveal essentially nothing about the correlations, and where a slightly modified estimator attains a variance of $2^{-\Omega(k)}$.   
\end{abstract}

\section{Introduction and Main Results}
Estimating the parameters of an unknown distribution from its samples is a basic task in many scientific problems. The vast majority of research in this field has been dedicated to the centralized setup, where a number of independent samples are being observed by the estimating entity~\cite{lehmann2006theory}. However, in many cases the data for the estimation task might be collected by remote terminals, who then need to communicate information regarding their observations in order to perform (or improve) estimation. When the budget for communication is limited, the parties must judiciously encode their observations and send a compressed version that is as useful as possible, creating a tension between communication and estimation.  

In this paper, we study the following distributed estimation setup. Let $\vect{X}$ and $\vect{Y}$ be a pair of jointly distributed random vectors taking values in Euclidean spaces of dimensions $d_X$ and $d_Y$ respectively. Assume the distribution of the pair is only known to belong to a given family of distributions, but is otherwise arbitrary. Two remotely located parties, Alice and Bob, draw i.i.d. samples $\{(\vect{X}_i,\vect{Y}_i)\}$ from this distribution, where the $\vect{X}$ component is observed only by Alice and the $\vect{Y}$ component is observed only by Bob. The parties are interested in estimating the set of correlations between the entries of $\vect{X}$ and $\vect{Y}$ using their local samples and limited communication. Specifically, we focus on the regime where the number of samples locally available to each party is essentially {\em unlimited}, but only a {\em fixed} number of $k$ bits can be transmitted on average from (say) Alice to Bob. In this extremal regime there is no coupling between data collection and communication (typically captured by the notion of {\em rate}, of communication bits per data sample), and the only constraint in the system stems from its distributive nature. Moreover, we restrict attention to cases where the correlations cannot be estimated locally (e.g. Gaussian marginals do not depend on the cross-correlation parameters), which further distills the distributive aspect of the problem. 

In what follows we focus mainly on the Gaussian case, i.e., where $\vect{X}$ and $\vect{Y}$ are jointly Gaussian random vectors. We begin our discussion with the scalar $d_X=d_Y=1$ case, where our goal is to estimate the correlation coefficient $\rho$. The only work we are aware of that deals with distributed estimation of the bivariate normal correlation under communication constraints is by Zhang and Berger~\cite{zhang1988estimation}, who studied the problem as an application of a more general result. Using random coding techniques, they proved the existence of an asymptotically unbiased estimator whose variance they provided as a function of the number of samples and the rate $R$ of communication bits per sample. Specializing to our setup by plugging in $k/R$ as the number of samples, the Zhang-Berger variance is given by  
\begin{align}
\Var \hat{\rho}_{ZB} = \frac{R}{k}\left(1 + \rho^2 + \frac{1-\rho^2}{2^{2R}-1} + o(1)\right) .
\end{align}
Since we do not impose a rate constraint in our setup, we can minimize the variance over $R$ to obtain
\begin{align}\label{eq:bergerZeroRate}
\inf_{R>0} \Var \hat{\rho}_{ZB} =  \frac{1}{k} \left( \frac{1-\rho^2}{ 2 \ln 2} + o(1) \right), 
\end{align}
which is attained (not surprisingly) in the zero-rate limit as $R\to 0$. It should be noted that this estimator was not claimed to be optimal in any sense. Furthermore, as the authors themselves indicate, the results in~\cite{zhang1988estimation} apply only to the single scalar parameter case, and it is not clear how to extend this approach to the vector case.  

In this Gaussian scalar setup, addressed in Section~\ref{s:singleCorr}, we introduce the following constructive scheme: Alice sends to Bob the index $J$ of the largest sample among her first $2^k$ samples, and Bob computes the unbiased estimator 
\begin{align}
\hat{\rho}_{\textnormal{max}} = \frac{Y_J}{\E  X_J} \approx \frac{Y_J}{\sqrt{2k\ln{2}}}. 
\end{align}
In Theorem~\ref{thrm:maxEst}, we show that this simple estimator attains the same variance as the non-constructive Zhang-Berger estimator~\eqref{eq:bergerZeroRate}, i.e., 
\begin{align}
\Var \hat{\rho}_{\textnormal{max}} =  \frac{1}{k} \left( \frac{1 - \rho^2}{2 \ln 2} + o(1) \right).
\end{align} 
Then, in preparations for the vector case, we describe a simple variation of this estimator: Alice scans her samples sequentially and finds the index $J$ of the first sample to pass a suitably chosen threshold. She then compresses this index using an optimal lossless variable-length code and sends the encoded version to Bob, who computes an estimator using his corresponding $Y$ sample, in a way similar to the maximum estimator above. This threshold estimator is unbiased, and also attains the Zhang-Berger variance. We note that the maximal/threshold-passing value of a scalar i.i.d. Gaussian sequence has been employed before in problems of writing on dirty paper~\cite{liu2006opportunistic}, \cite{borade2006writing}, and Gaussian lossy source coding~\cite{no2016rateless}. 

We proceed to consider the vector Gaussian setup (Section~\ref{s:multiCorr}). Without loss of generality, we assume that both parties know the distribution of Alice's vector, since she can estimate it arbitrarily well from her local samples and send a sufficiently accurate quantization to Bob with what can be shown to be a negligible cost in communication. In the case where $d_X=1$ and $d_Y>1$ we can trivially extend the scalar estimator by having Alice perform the same encoding (maximal or threshold) and have Bob apply the same type of estimation to each of the entries of $\vect{Y}$ using the single index obtained from Alice. The case of $d_X>1$ and $d_Y=1$ is more interesting. Of course, one could simply estimate each one of the correlations $\rho_\ell$ between $(\vect{X})_\ell$ and $Y$ separately by repeating the scalar method. A worthy goal is therefore to find an estimator that {\em dominates} the scalar approach, uniformly for all correlation values. In Proposition~\ref{pr:sclarNoGood}, we show that performing general linear operations (e.g., whitening the signal) before applying the scalar estimator, does not dominate the scalar approach. We then describe a multidimensional estimator that {\em does} dominate the scalar approach, by generalizing the scalar threshold to an appropriately chosen $d_X$-dimensional {\em stopping set}. We show that the resulting (constructive) estimator $\hat{\Rho}$  attains a total mean squared error that is a function of {\em  the highest correlation only}, and is given by 
\begin{align}
\E \|\hat{\Rho} - \Rho\|^2  \le  \frac{1}{k} \left(\frac{d_X^2 }{2 \ln 2} \min_{\ell \in [d]} \{1 - \rho^2_{\ell}\} + o(1) \right).
\end{align}
This is proved Theorem~\ref{thm:xvec}. We note that the case of $d_X,d_Y>1$ is again a trivial extension of the $d_X>1, d_Y=1$ case. 

Returning to the general non-Gaussian setup (Section~\ref{s:Apps}), we provide two additional results. In Section~\ref{ss:unknwnDists} we show how our estimators above can essentially be used to obtain the {\em same variance guarantees} when $(\vect{X},\vect{Y})$ are {\em arbitrarily distributed}, subject only to uniform integrability fourth moment conditions. This in particular means that one can always get a $O(1/k)$ variance in  distributed correlation estimation with $k$ transmitted bits on average. Recall that in centralized estimation problems, when the family of distributions is sufficiently smooth in the parameter of interest, the Cram\'er–Rao lower bound implies that the optimal estimation variance is $\Theta(1/n)$, where $n$ is the number of samples. Thus, the centralized number of samples required to achieve the same variance as in the distributed case is at least linear in the number of communication bits, i.e., each communication bit is worth at least a constant number of samples. It is perhaps tempting to guess that this relation is fundamental, i.e., that a bit is equivalent to a constant number of samples, hence that the variance cannot decrease faster than $\Omega(1/k)$, assuming that the family of distributions is such that  Bob cannot  estimate the correlations from his local samples. While we conjecture this is true in the Gaussian case, it does not hold in general: In Subsection~\ref{ss:genAdditive} we give an example of a rich family of distributions for which local samples reveal essentially nothing about the correlations, and where the variance of our (slightly modified) estimator is $2^{-\Omega(k)}$.

\subsection{Related Work}
The problem of distributed estimation under communication constraints has been studied in the last couple of decades by several authors. Zhang and Berger~\cite{zhang1988estimation} used random coding techniques to establish the existence of an asymptotically unbiased estimator whose variance is \emph{upper} bounded by a single-letter expression. Their results are limited to a certain family of joint distributions (that must satisfy an \emph{additivity} condition) that depend on a one-dimensional parameter. Ahlswede and Burnashev~\cite{ahlswede1990minimax} gave a multi-letter lower bound on the minimax estimation variance in the one-dimensional case. Han and Amari~\cite{han1995parameter} (see also the survey paper~\cite{amari1998statistical}) suggested a rate constrained encoding scheme, and obtained the likelihood equation based on the decoded statistic. They also showed that the estimation variance asymptotically achieves the inverse of the Fisher information of that statistic. Their results only apply to finite alphabets. Amari~\cite{amari2011optimal} discussed optimal compression in the specific setting of estimating the correlation between two binary sources. He showed that under linear-threshold encoding, there does not exist a single scheme that is uniformly optimal for all correlation values. A similar setup was discussed by Haim and Kochman~\cite{haim2016binary} in the context of hypothesis testing between two correlation values. Zhang {\em et al}~\cite{zhang2013information} provided minimax lower bounds for a distributed estimation setting in which all terminals observe samples from the same distribution. El Gamal and Lai~\cite{el2015slepian} showed that Slepian-Wolf rates are not necessary for distributed estimation over finite alphabets. 

There is a rich literature addressing other aspects of the distributed estimation problem. Xiao {\em et al}~\cite{xiao2004joint} and Lou~\cite{luo2005universal} considered distributed estimation of a location parameter under energy and bandwidth constraints. Gubner~\cite{gubner1993distributed}  considered a Bayesian distributed estimation setting and suggested a local quantization algorithm. Xu and Raginsky~\cite{xu2017information} provided lower bounds on the risk in a distributed Bayesian estimation setting with noisy channels between the data collection terminals and the estimation entity. Braverman {\em et al}~\cite{braverman2016communication} provided lower bounds for some high dimensional distributed estimation problems, again when the samples of all terminals are from the same distribution, e.g. for distributed estimation of the multivariate Guassian mean when it is known to be sparse. The authors of~\cite{schizas2008consensus},~\cite{xiao2006distributed},~\cite{ribeiro2006bandwidth} and~\cite{venkitasubramaniam2007quantization} addressed various distributed estimation setups where the measurements across the sensors are assumed to be independent. 

\subsection{Notations and preliminaries} The standard normal density is denoted by $\phi(x) = e^{-x^2/2}/\sqrt{2 \pi}$, and the tail probability by $Q(x) \triangleq \int_{x}^{\infty} \phi(t)dt$. For $Z \sim \mathcal{N}(0,1)$ the inverse Mills ratio is denoted by 
\begin{align}\label{eq:invMillsDef}
s(x) \triangleq  \E(Z \mid Z>x) = \frac{\phi(x)}{Q(x)}.
\end{align} 
We write $\log$ and $\ln$ for the base $2$ and natural logarithm, respectively. The {\em entropy} of the geometric distribution with parameter $p$ is given by $h_g(p) \triangleq h(p)/p$, 
where $h(p) = -p \log p -  (1 -p) \log (1-p)$ is the binary entropy function. Note that $h_g(p) = -\log(p)(1+ o(1))$ as $p\to 0$. Recall also that any discrete random variable (e.g. in our case, a geometric r.v.) can be losslessly encoded using a prefix-free code with expected length exceeding its entropy by at most one bit~\cite{cover2006elements}. Since in the setups we consider the entropy grows large, this excess one bit has vanishing effect on our results, hence for the sake of readability we disregard it throughout. 

For any natural $n$ we denote $[n] \triangleq \{1,\ldots, n\}$. For a vector $\vect{v}$, the $i$-th coordinate is denoted by $(\vect{v})_i$. Similarly, $(M)_{ij}$ denotes the $ij$-th entry of the matrix $M$. The $d \times d$ identity matrix is denoted by $\vect{I}_d$. We use the standard order notation; in the following, $f$ and $g$ are positive functions with discrete or continuous domain. We write $f = o(g)$ to indicate that $\lim f/g = 0$, and $f = O(g)$ to indicate that $\limsup f/g < \infty$, where the arguments and implied limits should be clear from the context.  Writing $f = \Omega(g)$ means that $g = O(f)$, and $f = \Theta(g)$ means that both $f = O(g)$ and $f = \Omega(g)$. 

Given a statistic $T$, and a scalar parameter $\theta$ we wish to estimate, The Fisher information of estimating $\theta$ from $T$ (see e.g. \cite{lehmann2006theory}) is given by 
\begin{align}
	\fisher_T(\theta) \triangleq \E\left[\left(\frac{\partial \log f(T\mid \theta)}{\partial\theta}\right)^2\right],
\end{align}
where $f(t\mid \theta)$ is the p.d.f. of $T$ for the given value of $\theta$.  
The Cram\'er–Rao lower bound (CRLB) states that,  under some regularity conditions (see e.g. \cite{lehmann2006theory}) that are trivially satisfied in our setups, any unbiased estimator $\hat{\theta}=\hat{\theta}(T)$ of $\theta$ satisfies 
\begin{align}\label{eq:crlbsclr}
\Var \hat{\theta} \ge 1/\fisher_T(\theta).
\end{align}
An estimator $\hat{\theta}$ that satisfies \eqref{eq:crlbsclr} with equality is said to be \emph{efficient}. We emphasize that the efficiency is with respect to the statistic $T$ by saying it is \emph{efficient given $T$}. The estimators and statistics in this paper depend on the number of communicated bits, $k$. We call an estimator $\hat{\theta}$  {\em asymptotically efficient given $T$} if  $\E \hat{\theta} \to  \theta$, and $ \fisher_T(\theta)\cdot \Var\hat{\theta}  \to 1$ as $k\to \infty$. The estimated parameter may be vector valued, in which case $\fisher_T(\thetaBold)$ is a matrix given by
\begin{align}
\fisher_T(\thetaBold) \triangleq \E\left[\left(\frac{\partial \log f(T\mid \thetaBold)}{\partial\thetaBold}\right)^T\cdot \left(\frac{\log f(T\mid \thetaBold)}{\partial\thetaBold}\right)\right], 
\end{align}
and the CRLB reads $\Cov \hat{\thetaBold} \ge \fisher_T^{-1}$ where the inequality is in the positive semidefinite sense. In the vector case we say that an estimator $\hat{\thetaBold}(T)$  is asymptotically efficient if the estimator  $\vect{v}^T\hat{\thetaBold}(T)$ of $\vect{v}^T \thetaBold$ is asymptotically efficient for any $\vect{v} \in \mathbb{R}^{\dim(\thetaBold)}$. We note that since the aforementioned regularity conditions are satisfied in our Gaussian setups, then (asymptotic) efficiency of an estimator implies that it is  \emph{(asymptotically) minimum variance unbiased}.

The Fisher information matrix of a Gaussian vector with mean $\mu$ and covariance matrix $\Sigma$, where both are functions of a parameter vector $\thetaBold$, is given by  (see e.g. \cite{kay1993fundamentals})
\begin{align}\label{eq:generalFisher}
(\fisher)_{ij} \!=\! \frac{\partial \mu^T}{\partial (\thetaBold)_i} \Sigma^{-1}\! \frac{\partial \mu}{\partial (\thetaBold)_j} \!+\! 
\frac{1}{2} \tr \left(  \Sigma^{-1}\! \frac{\partial \Sigma}{\partial (\thetaBold)_i} \Sigma^{-1}\! \frac{\partial \Sigma}{\partial (\thetaBold)_j} \right)\!.
\end{align}
A common setup throughout is where a parameter is estimated from $(\vect{X},\vect{Y})$  where $\vect{Y}|\vect{X}$ is Gaussian, and the distribution of $\vect{X}$ does not depend on the parameter. In this case we have
\begin{align}\label{eq:generalFisher2}
\fisher_{\vect{X},\vect{Y}} = \E_{\vect{X}} \fisher_{\vect{Y}|\vect{X}}
\end{align}
where $\fisher_{\vect{Y}|\vect{X}}$ is obtained via \eqref{eq:generalFisher}.

\section{Estimating a single correlation}\label{s:singleCorr}
In this section, we consider the case where $X$ and $Y$ are both scalar, jointly Gaussian r.v.s, with unknown parameters satisfying only $\E{X^2}, \E{Y^2}  < u$ for some known $u$.  Since the number of local samples available to Alice and Bob is unlimited, they can both estimate their own mean and variance arbitrarily well (taking $u$ into account) and normalize their samples accordingly. Hence, without loss of generality we can assume that $X,Y\sim \mathcal{N}(0,1)$, and that the only unknown parameter 
is their correlation coefficient $\rho$. This model can be written as
\begin{align}\label{eq:scalarModel}
Y = \rho X + \sqrt{1-\rho^2}Z
\end{align}
where $Z \sim \mathcal{N}(0,1)$ is statistically independent of $X$. 

Alice, who observes the i.i.d. samples $\{X_i\}$, can transmit $k$ bits on average to Bob, who observes the corresponding $\{Y_i\}$ samples and would like to obtain a good estimate of $\rho$ in the mean squared error sense. We note that the conditional Fisher information of $\rho$ from $Y$, given that $X=x$, is 
\begin{align}\label{eq:fisher_scalar_basic}
\fisher_{Y | X=x}(\rho) = \frac{(1 - \rho^2) x^2 + 2 \rho^2}{(1 - \rho^2)^2}, 
\end{align}
which is linear in $x^2$. This motivates using an estimator based on a measurement for which $|x|$ is as large as possible. The same can also be intuitively deduced from~\eqref{eq:scalarModel}, since if one controls $X$, then picking it as large as possible would ``maximize the SNR''. For simplicity, we look at large positive values of $x$ rather than large value of $|x|$. Our derivations can be easily modified to hold in the latter case (with one extra bit describing the sign) without affecting the results.

\subsection{Max estimator}\label{sss:max} 
Following the heuristic discussion above, consider the following scheme. Given the constraint $k$ on the expected number of communication bits, Alice looks at her first $2^k$ samples, finds the maximal one, and sends its index
\begin{align}
J = \argmax_{i \in [2^k]} X_i
\end{align}
to Bob, using exactly $k$ bits. Bob now looks at $Y_J$, his sample that corresponds to the same index, which we refer to as the {\em co-max}\footnote{This is also known in the literature as the {\em max concomitant}, see e.g. \cite{david2004order}}. If Bob were in possession of $X_J$ as well, and observing the model~\eqref{eq:scalarModel} again, a natural estimator for $\rho$ he could have used is $Y_J/X_J$. In fact, it can be shown that this estimator is an approximated solution to the maximum likelihood equation, which is third a degree polynomial in this case (see appendix~\ref{apn:MLE}). However, since $X_J$ is not available, Bob uses the estimator 
\begin{align}
\hat{\rho}_{\textnormal{max}} = \frac{Y_J}{\E  X_J}
\end{align}
that depends only on $J$ (communicated by Alice) and on his own samples. The following Theorem shows that this simple estimator attains the same variance as the non-constructive Zhang-Berger estimator~\eqref{eq:bergerZeroRate}, and also that knowing the value of $X_J$ does not help. 
\begin{theorem}\label{thrm:maxEst}
	The estimator $\hat{\rho}_{\textnormal{max}}$ is unbiased with 
	\begin{align} \label{eq:rhoest3inbits}
	\Var \hat{\rho}_{\textnormal{max}} =  \frac{1}{k} \left( \frac{1 - \rho^2}{2 \ln 2} + o(1) \right)	
	\end{align} 
	where $k$ is the number of transmitted bits. Furthermore, $\hat{\rho}_{\textnormal{max}}$ is asymptotically efficient given $(X_J,Y_J)$.
\end{theorem}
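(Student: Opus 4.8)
The plan is to establish three things in sequence: unbiasedness, the explicit variance formula, and asymptotic efficiency given $(X_J, Y_J)$.

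\textbf{Unbiasedness.} First I would use the model \eqref{eq:scalarModel}, $Y = \rho X + \sqrt{1-\rho^2}\,Z$ with $Z\perp X$, applied at the (random) index $J = \argmax_{i\in[2^k]} X_i$. Conditioning on the vector $(X_1,\ldots,X_{2^k})$ (hence on $J$ and on $X_J$), the noise $Z_J$ is still independent of this vector since the $Z_i$ are i.i.d. and independent of all $X_i$; hence $\E[Y_J \mid X_1,\ldots,X_{2^k}] = \rho X_J$, and taking expectations gives $\E Y_J = \rho\,\E X_J$. Therefore $\E\hat{\rho}_{\textnormal{max}} = \E Y_J / \E X_J = \rho$. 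Note $\E X_J = \E[\max_{i\in[2^k]} X_i]$ is the expected maximum of $2^k$ i.i.d.\ standard normals, which is finite and positive, so the estimator is well defined.

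\textbf{Variance.} Write $\Var\hat{\rho}_{\textnormal{max}} = \Var(Y_J)/(\E X_J)^2$. For the numerator, use the law of total variance conditioned on $(X_1,\ldots,X_{2^k})$: the conditional variance of $Y_J$ is $1-\rho^2$ (constant), and the conditional mean is $\rho X_J$, so $\Var(Y_J) = (1-\rho^2) + \rho^2\,\Var(X_J)$. Thus
\begin{align}
\Var\hat{\rho}_{\textnormal{max}} = \frac{(1-\rho^2) + \rho^2\,\Var(X_J)}{(\E X_J)^2}.
\end{align}
Now I invoke the classical extreme-value asymptotics for the maximum $M_n$ of $n$ i.i.d.\ $\mathcal N(0,1)$ variables: $\E M_n = \sqrt{2\ln n}\,(1+o(1))$ and $\Var(M_n) = O(1/\ln n)$ (more precisely $\Var(M_n)\sim \pi^2/(12\ln n)$, but the crude bound suffices). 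With $n = 2^k$ this gives $\E X_J = \sqrt{2k\ln 2}\,(1+o(1))$ and $\Var(X_J) = O(1/k)$. Substituting, the $\rho^2\Var(X_J)$ term contributes $O(1/k)$ in the numerator while $(\E X_J)^2 = 2k\ln 2\,(1+o(1))$, so that term is $O(1/k^2)$ overall and absorbed into the $o(1/k)$; the leading term is $(1-\rho^2)/(2k\ln 2)\,(1+o(1))$, which is exactly \eqref{eq:rhoest3inbits}. The main technical point is having sharp enough control of $\E M_n$ (it must be $\sqrt{2\ln n}$ to first order, not merely $\Theta(\sqrt{\ln n})$) and a vanishing bound on $\Var(M_n)$; these are standard but I would cite them (e.g.\ via the Mills-ratio/Gumbel-limit analysis, or a direct bound on the normalized maximum).

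\textbf{Asymptotic efficiency given $(X_J,Y_J)$.} Here the statistic is $T=(X_J,Y_J)$. Since $X_J$ has a distribution not depending on $\rho$ (it is the max of i.i.d.\ standard normals), \eqref{eq:generalFisher2} applies: $\fisher_T(\rho) = \E_{X_J}\,\fisher_{Y_J\mid X_J}(\rho)$, and $\fisher_{Y_J\mid X_J=x}(\rho)$ is given by \eqref{eq:fisher_scalar_basic}, namely $\frac{(1-\rho^2)x^2 + 2\rho^2}{(1-\rho^2)^2}$. Taking expectation over $X_J$ and using $\E X_J^2 = \Var(X_J) + (\E X_J)^2 = 2k\ln 2\,(1+o(1))$, we get $\fisher_T(\rho) = \frac{2k\ln 2}{1-\rho^2}(1+o(1))$, so $1/\fisher_T(\rho) = \frac{1-\rho^2}{2k\ln 2}(1+o(1))$, which matches $\Var\hat{\rho}_{\textnormal{max}}$ to leading order. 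Combined with unbiasedness, this is precisely the definition of asymptotic efficiency given $T$. I expect the only real obstacle throughout to be marshaling the extreme-value estimates with the right precision; everything else is a direct computation using the conditional structure of \eqref{eq:scalarModel} and the Fisher-information formulas already stated.
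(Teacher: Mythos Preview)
Your proposal is correct and follows essentially the same approach as the paper: compute $\Var\hat{\rho}_{\textnormal{max}} = \big(\rho^2\Var X_J + 1-\rho^2\big)/(\E X_J)^2$, plug in the extreme-value asymptotics $\E X_J = \sqrt{2k\ln 2}(1+o(1))$, $\Var X_J = O(1/k)$, and then compare to the Fisher information obtained by taking $\E_{X_J}$ of \eqref{eq:fisher_scalar_basic}. Your write-up simply spells out the conditioning argument for unbiasedness and the law-of-total-variance step more explicitly than the paper does.
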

\begin{proof}
It is easy to check that $\hat{\rho}_{\textnormal{max}}$ is unbiased. In order to compute its variance, we need to compute the mean and variance of $X_J$, which is the maximum of $2^k$ i.i.d. standard normal r.v.s. From extreme value theory (see e.g. \cite{david2004order}) applied to the normal distribution case, we obtain: 
	\begin{align}
	&\E X_J = \sqrt{2 \ln (2^k)}(1 + o(1))\\
	&\E X_J^2 = 2 \ln{(2^k)} (1 + o(1))\\
	&\Var X_J = O \left( \frac{1}{\ln (2^k)} \right).
	\end{align}
Therefore, we have that 
	 	\begin{align}
\Var \hat{\rho}_{\textnormal{max}} 
&= \frac{1}{(\E X_J)^2} \Var (\rho X_J + \sqrt{1-\rho^2}Z) \\
&= \frac{1}{(\E X_J)^2} (\rho^2 \Var X_J + 1-\rho^2) \\
&= \frac{1}{2k \ln 2} (1-\rho^2 + o(1)).
\end{align}

Now, recalling~\eqref{eq:fisher_scalar_basic}, the Fisher Information of $\rho$ from $(X_J,Y_J)$ is given by 
\begin{align}\label{eq:maxFish}
	\fisher_{X_J Y_J} (\rho) &= \frac{(1 - \rho^2) \E  X_J^2 + 2 \rho^2}{(1 - \rho^2)^2}\\
	&=2k\ln{2} \left( \frac{1 }{1 - \rho^2} + o(1) \right),
\end{align}
and hence $\hat{\rho}_{\textnormal{max}}$ is asymptotically efficient given $(X_J,Y_J)$. 
\end{proof}
Theorem \ref{thrm:maxEst} suggests that using a better estimator of $X_J$ in lieu of its expectation (by having Alice send some quantization of $\hat{X}_J$ and having Bob compute $\hat{\rho} = Y_J / \hat{X}_J$) would not improve the performance asymptotically, as $\hat{\rho}_{\textnormal{max}}$ is optimal among all unbiased estimators that use both the max and co-max. In Section \ref{ss:genAdditive}, we will see that this observation does not extend to some other additive models.

We note that the random coding Zhang-Berger estimator only deals with the scalar case, and as the authors themselves indicate~\cite{zhang1988estimation}, it remains unclear whether it could be extended to the the case of multiple correlations. In contrast, our constructive approach can also be naturally extended to the multidimensional case. To that end, it is instructive to first describe a simple variation of our scalar estimator. 

\subsection{Threshold estimator}\label{ss:sclrThresh}
We now introduce a simple modification to max estimator that will be useful in the sequel. Instead of taking the maximum of a fixed number of measurements, Alice sequentially scans her samples until she finds a sample that exceeds some fixed threshold, to be determined later. She then sends the index of this sample to Bob, who proceeds similarly to the max method. The main difference is that using the max method Alice sends a fixed number of bits, whereas using the threshold method she sends a random number of bits. In this subsection, we introduce and analyze the threshold estimator and demonstrate that it is asymptotically equivalent to the max estimator, in terms of how the estimation variance is related to the expected number of bits transmitted. As mentioned above, the main motivation for studying the threshold estimator is that in contrast to the max estimator, it can be naturally extended to the multidimensional case.

Precisely, let 
\begin{align}
J = \min\{i:X_i > t\}, 
\end{align}
and consider the estimator
\begin{align}
\hat{\rho}_{\th} = \frac{Y_J}{\E X_J}.
\end{align}
Note that the index $J$ is distributed geometrically with parameter $p = \Pr(X > t) = Q(t)$. Alice can therefore represent $J$ using a prefix-free code (e.g., Huffman) with at most $h_g(p)+1$ bits on average, where $h_g(p)$ is the entropy of this geometric distribution~\cite{cover2006elements}. For brevity of exposition, we assume that the expected number of bits is exactly $k=h_g(p)$, as this does not affect the asymptotic behavior. Therefore, to satisfy the communication constraint the threshold must be set to 
\begin{align}
t = Q^{-1}(h_g^{-1}(k)). 
\end{align}
We later show that $t = \sqrt{2k \ln{2}}(1 + o(1))$ as $k$ grows large.  The following Theorem shows that as the max estimator, the threshold estimator also attains the same variance as the non-constructive Zhang-Berger estimator~\eqref{eq:bergerZeroRate}, and also that knowing the value of $X_J$ again does not help. 
\begin{theorem}\label{thrm:stEst}
	The estimator $\hat{\rho}_{\th}$ is unbiased with
	\begin{align} \label{eq:rhoest2inbits}
	\Var \hat{\rho}_{\th} =  \frac{1}{k} \left( \frac{1 - \rho^2}{ 2 \ln 2} + o(1)  \right)
	\end{align} 
	where $k$ is the expected number of transmitted bits.  Furthermore, $\hat{\rho}_{\th}$ is asymptotically efficient given $(X_J,Y_J)$.
\end{theorem}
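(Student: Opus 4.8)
The plan is to mirror the proof of Theorem~\ref{thrm:maxEst}, replacing the extreme-value asymptotics of the sample maximum by the simpler asymptotics of a single standard normal conditioned to exceed the threshold $t$, and then to separately verify the claimed relation $t=\sqrt{2k\ln 2}\,(1+o(1))$. First, since $J=\min\{i:X_i>t\}$ is a stopping time with respect to $\{X_i\}$ alone, and the pairs $(X_i,Z_i)$ in the model~\eqref{eq:scalarModel} are i.i.d.\ with $X_i$ independent of $Z_i$, conditioning on $\{J=j\}$ leaves $Z_j\sim\mathcal N(0,1)$ independent of $X_j$, while $X_j$ becomes distributed as $X\mid X>t$. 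Hence the conditional law of $(X_J,Y_J)$ given $J=j$ does not depend on $j$, and $(X_J,Y_J)\overset{d}{=}(X',\,\rho X'+\sqrt{1-\rho^2}Z)$ with $X'\sim(X\mid X>t)$ and $Z\sim\mathcal N(0,1)$ independent. A routine integration by parts then gives $\E X_J=s(t)=\phi(t)/Q(t)$ and $\E X_J^2=1+t\,s(t)$, whence $\Var X_J=1+t\,s(t)-s(t)^2$; using the tail expansion $s(t)=t+1/t+o(1/t)$ we obtain $\E X_J^2=t^2(1+o(1))$, $(\E X_J)^2=t^2(1+o(1))$ and $\Var X_J=O(1/t^2)$.

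The key new point is the threshold-versus-bits relation. Since $p=Q(t)\to0$ as $t\to\infty$, we have $k=h_g(p)=h(p)/p=\log(1/p)+O(1)$, so $\log(1/Q(t))=k(1+o(1))$. On the other hand, from $Q(t)=\phi(t)\bigl(1/t+o(1/t)\bigr)$ one gets $\log(1/Q(t))=\tfrac{t^2}{2\ln2}+\log t+\tfrac12\log(2\pi)+o(1)=\tfrac{t^2}{2\ln2}(1+o(1))$. Equating the two expressions yields $t^2=2k\ln2\,(1+o(1))$, i.e.\ $t=\sqrt{2k\ln2}\,(1+o(1))$; in particular $(\E X_J)^2=\E X_J^2=2k\ln2\,(1+o(1))$ and $\Var X_J=O(1/k)$.

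It remains to assemble the conclusions. From the structural description above, $\E Z_J=0$ and so $\E Y_J=\rho\,\E X_J$, making $\hat{\rho}_{\th}=Y_J/\E X_J$ unbiased; moreover $\Var Y_J=\rho^2\Var X_J+(1-\rho^2)$, giving
\begin{align*}
\Var\hat{\rho}_{\th}=\frac{\rho^2\Var X_J+1-\rho^2}{(\E X_J)^2}=\frac{1-\rho^2+o(1)}{2k\ln2}.
\end{align*}
For efficiency, \eqref{eq:fisher_scalar_basic} together with \eqref{eq:generalFisher2} gives $\fisher_{X_JY_J}(\rho)=\frac{(1-\rho^2)\E X_J^2+2\rho^2}{(1-\rho^2)^2}=2k\ln2\bigl(\tfrac1{1-\rho^2}+o(1)\bigr)$, so $\fisher_{X_JY_J}(\rho)\cdot\Var\hat{\rho}_{\th}\to1$, i.e.\ $\hat{\rho}_{\th}$ is asymptotically efficient given $(X_J,Y_J)$.

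The main obstacle is the second step, pinning down $t$ as a function of $k$, which amounts to composing the asymptotic inverses of $h_g$ and $Q$ and tracking the lower-order terms carefully enough to land the leading $\tfrac{t^2}{2\ln2}$ behavior; all remaining computations are routine properties of the truncated normal. A secondary point meriting care is that $J$ is a.s.\ finite but unbounded, so one should check finiteness of $\E Y_J$ and $\Var Y_J$ and justify the conditioning on $\{J=j\}$; and, as noted in the preliminaries, the at-most-one-bit overhead of prefix-free coding a geometric random variable is absorbed into the $o(1)$ terms.
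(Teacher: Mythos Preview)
Your proposal is correct and follows essentially the same route as the paper's proof: compute the first two moments of $X_J$ from the truncated-normal identities $\E X_J=s(t)$ and $\E X_J^2=1+t\,s(t)$, invert the relation $k=h_g(Q(t))$ to get $t^2=2k\ln 2\,(1+o(1))$, and then assemble the variance and Fisher information exactly as in Theorem~\ref{thrm:maxEst}. One small imprecision: from the expansion $s(t)=t+1/t+o(1/t)$ alone you only get $\Var X_J=o(1)$, not $O(1/t^2)$ (the paper uses one more term of the Mills-ratio expansion for that), but since only $\Var X_J=o(1)$ is needed in your variance computation this does not affect the argument.
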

\begin{proof}
	It is immediate to verify that $\hat{\rho}_{\th}$ is unbiased. We have from \eqref{eq:invMillsDef} that $\E X_J = s(t)$, and straightforward calculations give $\E X_J^2 = 1 + ts(t)$. Also it is known that $t \le s(t) \le t + t^{-1}$, and that (see e.g. \cite{small2010expansions}) 
	\begin{align}\label{eq:tailMeanExp}
	\frac{1}{s(t)} = \frac{1}{t} - \frac{1}{t^3}  + \frac{3}{t^5} + O \left( \frac{1}{t^7} \right).
	\end{align}
	Combining the above yields 
	\begin{align}\label{eq:thr_moments}
	&\E X_J^2 = t^2 (1+o(1)), \quad \Var X_J = 1/t^2 + O(1/t^4). 
	\end{align}
	
	Let us now express the threshold $t$ in terms of $k$. We have $h_g(p) = - \log (p)(1 + o(1))$ as $p \rightarrow 0$, and also that $-\ln Q(t) = \frac{t^2}{2}(1+o(1))$. Therefore the expected number of bits sent by Alice is
	\begin{align}
	k &= h_g(Q(t))\\
	&= - \log (Q(t)) (1 + o(1)) \\
	&= t^2 \left( \frac{1}{2 \ln 2}+o(1) \right), 
	\end{align}
	which yield  $t = \sqrt{2k \ln{2}}/(1 +  o(1))$. Combining this with~\eqref{eq:thr_moments} and recalling the model~\eqref{eq:scalarModel}, we obtain 
	\begin{align} 
	\Var \hat{\rho}_{\th} 
	&= \frac{1}{s^2} \left(\rho^2 \Var X_J +  1 - \rho^2 \right)\\ \label{eq:singleCorrMSEwrtt}
	&= \frac{1 - \rho^2}{t^2} (1 + o(1))\\\
	& = \frac{1}{k} \left( \frac{1 - \rho^2}{ 2 \ln 2} + o(1)  \right).
	\end{align}
	Recalling~\eqref{eq:fisher_scalar_basic}, the Fisher information is given by 
	\begin{align}
	\fisher_{X_J Y_J} &= \frac{t^2}{1 - \rho^2} (1+o(1)) =\frac{2k\ln{2}}{1-\rho^2}(1+o(1)), 
\end{align}
concluding the proof. 
\end{proof}

Note that unlike the maximum estimator, the threshold estimator's variance admits an exact non-asymptotic expression:
\begin{align}
\Var \hat{\rho}_{\th} = \frac{1}{s^2(t)}(1 - \rho^2 (s(t)-t)\cdot s(t)), 
\end{align}
where $t = Q^{-1}(h_g^{-1}(k))$.

\section{Estimating multiple correlations}\label{s:multiCorr}
We proceed to address the more challenging multidimensional case where $\vect{X},\vect{Y}$ are jointly Gaussian random vectors with unknown parameters. As in the scalar case, we only assume that the variances of all the entries of both $\vect{X}$ and $\vect{Y}$ are bounded by some known constant, hence Alice and Bob can compute the means and variances of their samples, and normalize them accordingly. Thus, without loss of generality we can assume that all the entries of $\vect{X}$ and $\vect{Y}$ have zero mean and unit variance. In fact, for the same reasons we can assume that Alice knows $\Cov{\vect{X}}$ and Bob knows $\Cov{\vect{Y}}$. 

As before, Alice observes the i.i.d. samples $\{\vect{X}_i\}$ and can transmit $k$ bits on average to Bob, who observes the corresponding $\{\vect{Y}_i\}$ samples and would like to obtain a good estimate of $\E \vect{Y} \vect{X}^T$, the collection of all the correlations between the different entries of $\vect{X}$ and $\vect{Y}$. For simplicity, our performance measure will be the expected sum of squared estimation errors across all such correlations. 

Below we discuss the two extremal setups: The case where $X$ is a scalar and $\vect{Y}$ is a vector, and the opposite case where $\vect{X}$ is a vector and $Y$ is a scalar. This is sufficient since estimators for the general setup where both $\vect{X}, \vect{Y}$ are vectors are straightforward to construct by combining the two extremal setups, hence discussing this more general setup adds no useful insight. Clearly, the scalar methods suggested in Section~\ref{s:singleCorr} can be directly applied to the multidimensional case, by allocating the bits between the tasks of estimating each correlation separately. It is therefore interesting to try and find a truly multidimensional scheme that \emph{dominates} the scalar method, i.e., performs at least as good uniformly for all possible values of the correlations. 

\subsection{$X$ is a scalar, $\vect{Y}$ is a vector}\label{subsec:xscalaryvec}
Suppose $(X,\vect{Y})$ are jointly Gaussian, where $X \sim \mathcal{N}(0,1)$, $\vect{Y}\sim \mathcal{N}(\vect{0},\Sigma_{\vect{Y}})$ is a $d$-dimensional (column) vector, and $\Sigma_{\vect{Y}}$ has an all-ones diagonal and is known to Bob, who is interested in estimating the column correlation vector 
\begin{align}
\Rho = \E \vect{Y} X = [\rho_1,\ldots, \rho_d]^T. 
\end{align}
The natural extension of the two scalar methods of Section~\ref{s:singleCorr} to this case is obvious. Here we analyze the threshold method, yet the max method is as simple and would yield the same results. Alice waits until $X_i$ passes a threshold $t>0$ and transmits the resulting index
\begin{align}
J = \min\{i:X_i > t\} 
\end{align}
to Bob, where $t = Q^{-1}(h_g^{-1}(k))$. The estimator is then
\begin{align}\label{eq:xScYVecEst}
\hat{\Rho} = \frac{1}{\E X_J}\vect{Y}_J = \frac{1}{s(t)}\vect{Y}_J,
\end{align}
which is an unbiased approximation of the maximum likelihood estimator (see Appendix~\ref{apn:MLE}).
\begin{theorem}\label{thm:yvec}
	The estimator $\hat{\Rho}$ in \eqref{eq:xScYVecEst} is unbiased with 
	\begin{align}
	\tr \Cov \hat{\Rho} = \frac{1}{k}\left( \frac{1}{2 \ln 2}\sum_{\ell=1}^d (1-\rho_{\ell}^2)  + o(1)\right)
	\end{align}
	where $k$ is the expected number of transmitted bits. Furthermore, $\hat{\Rho}$  is asymptotically efficient given  $(X_J,\vect{Y}_J)$. 
\end{theorem}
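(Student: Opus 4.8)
The plan is to run the coordinate-wise scalar argument of Theorem~\ref{thrm:stEst} and then assemble the joint Fisher information matrix. Write the model as $\vect{Y}=\Rho X+\vect{W}$, where $\vect{W}\sim\mathcal{N}(\vect{0},\Sigma_{\vect{W}})$ with $\Sigma_{\vect{W}}\triangleq\Sigma_{\vect{Y}}-\Rho\Rho^T$ is independent of $X$; the $\ell$-th coordinate is precisely the scalar model~\eqref{eq:scalarModel} with correlation $\rho_\ell$. Since $J=\min\{i:X_i>t\}$ is a function of $\{X_i\}$ only, and $\vect{W}$ is independent of that entire sequence, the pair $(X_J,J)$ is independent of $\vect{W}_J\sim\mathcal{N}(\vect{0},\Sigma_{\vect{W}})$. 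Exactly as in the scalar case $\E X_J=s(t)$, hence $\E\vect{Y}_J=\Rho\,\E X_J=\Rho\,s(t)$ and $\hat{\Rho}$ is unbiased.

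For the covariance, independence of $X_J$ and $\vect{W}_J$ gives
\begin{align}
\Cov(\vect{Y}_J)=\Rho\Rho^T\,\Var(X_J)+\Sigma_{\vect{Y}}-\Rho\Rho^T ,
\end{align}
so $\Cov\hat{\Rho}=s(t)^{-2}\Cov(\vect{Y}_J)$. Taking the trace, using $\tr\Sigma_{\vect{Y}}=d$ and $\tr(\Rho\Rho^T)=\|\Rho\|^2=\sum_\ell\rho_\ell^2$, and plugging in the scalar asymptotics $\Var X_J=O(1/t^2)$, $s(t)^2=t^2(1+o(1))$ and $t^2=2k\ln 2\,(1+o(1))$ from the proof of Theorem~\ref{thrm:stEst}, yields
\begin{align}
\tr\Cov\hat{\Rho}=\frac{\|\Rho\|^2\,\Var X_J+\sum_{\ell=1}^d(1-\rho_\ell^2)}{s(t)^2}=\frac{1}{k}\left(\frac{1}{2\ln 2}\sum_{\ell=1}^d(1-\rho_\ell^2)+o(1)\right),
\end{align}
as claimed.

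For asymptotic efficiency I would compute $\fisher_{X_J\vect{Y}_J}(\Rho)$ from~\eqref{eq:generalFisher2}--\eqref{eq:generalFisher} applied to $\vect{Y}_J\mid X_J=x\sim\mathcal{N}(\Rho x,\Sigma_{\vect{W}})$ with $\thetaBold=\Rho$. Here $\partial\mu/\partial\rho_i=x\,\vect{e}_i$ and $\partial\Sigma_{\vect{W}}/\partial\rho_i=-(\vect{e}_i\Rho^T+\Rho\vect{e}_i^T)$, so the mean-derivative term of~\eqref{eq:generalFisher} contributes $x^2\Sigma_{\vect{W}}^{-1}$ and the covariance-derivative term contributes a matrix independent of $k$; averaging over $X_J$ and using $\E X_J^2=1+t\,s(t)=2k\ln 2\,(1+o(1))$ gives
\begin{align}
\fisher_{X_J\vect{Y}_J}(\Rho)=\E X_J^2\cdot\Sigma_{\vect{W}}^{-1}+O(1)=2k\ln 2\,(1+o(1))\,(\Sigma_{\vect{Y}}-\Rho\Rho^T)^{-1}.
\end{align}
Then for any fixed $\vect{v}$ with $\vect{v}^T(\Sigma_{\vect{Y}}-\Rho\Rho^T)\vect{v}>0$ (the joint law of $(X,\vect{Y})$ is non-degenerate), both $\Var(\vect{v}^T\hat{\Rho})=s(t)^{-2}\vect{v}^T\Cov(\vect{Y}_J)\vect{v}$ and $\vect{v}^T\fisher_{X_J\vect{Y}_J}^{-1}\vect{v}$ equal $\frac{1}{2k\ln 2}\vect{v}^T(\Sigma_{\vect{Y}}-\Rho\Rho^T)\vect{v}\,(1+o(1))$, so their ratio tends to $1$ and $\hat{\Rho}$ is asymptotically efficient given $(X_J,\vect{Y}_J)$.

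The moment asymptotics are routine, inherited from Theorem~\ref{thrm:stEst}; the only genuinely new point, and the main thing to get right, is the Fisher information matrix, where one must verify that the covariance-derivative term of~\eqref{eq:generalFisher} — present because $\Sigma_{\vect{W}}$ itself depends on $\Rho$ — is $O(1)$ and therefore asymptotically negligible against the $\Theta(k)$ mean-derivative term, while also keeping track of the non-degeneracy caveat needed for the per-direction efficiency claim.
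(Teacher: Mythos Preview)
Your proposal is correct and follows essentially the same route as the paper: both write $\vect{Y}_J\mid X_J\sim\mathcal{N}(\Rho X_J,\Sigma_{\vect{Y}}-\Rho\Rho^T)$, read off $\Cov\hat{\Rho}$ directly, and reduce the trace and moment asymptotics to Theorem~\ref{thrm:stEst}. For the efficiency claim the paper computes the Fisher matrix exactly as $\fisher_{X_J\vect{Y}_J}=\Sigma^{-1}(\E X_J^2+\Rho^T\Sigma^{-1}\Rho)+\Sigma^{-1}\Rho\Rho^T\Sigma^{-1}$ (with $\Sigma=\Sigma_{\vect{Y}}-\Rho\Rho^T$) and inverts it via Sherman--Morrison before comparing to $\Cov\hat{\Rho}$; your shortcut of noting that the covariance-derivative contribution is $O(1)$ against the $\Theta(k)$ mean term is a legitimate and slightly more economical variant of the same computation.
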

\begin{proof}
	This is simple consequence of Theorem~\ref{thrm:stEst}, except for asymptotic efficiency which we prove in Appendix~\ref{apn:yVecthmProof}.
\end{proof}
This method (trivially) dominates the scalar method applied separately to each of the correlations, as the latter would yield $\sum \Var \hat{\rho}_i =   \frac{1}{k}\left( \frac{d}{2 \ln 2}\sum_{\ell=1}^d (1-\rho_{\ell}^2)  + o(1)\right)$.

\subsection{$\vect{X}$ is a vector, $Y$ is a scalar}\label{subsec:xvecyscalar}
Consider the setup where $(\vect{X},Y)$ are jointly Gaussian where $Y \sim \mathcal{N}(0,1)$ and $\vect{X}$ is a $d$-dimensional (column) vector $\sim \mathcal{N}(\vect{0},\Sx)$ where $\Sx$ is known to Alice and has an all-ones diagonal. Alice observes $\{\vect{X}_i\}$ and transmits $k$ bits to Bob on average, who observes $\{Y_i\}$ and wishes to estimate the \emph{row} vector 
\begin{align}
\Rho = \E Y \vect{X}^T = [\rho_1, \ldots, \rho_d].
\end{align}
The model can be written as (see e.g. \cite{rencher2003methods})
\begin{align}\label{eq:twoTwoModel}
Y = \Rho \Sigma_\vect{X}^{-1} \vect{X} + \sigma Z
\end{align}
where $Z \sim \mathcal{N}(0,1)$  is independent of $\vect{X}$, and $\sigma^2 = 1 - \Rho \Sx^{-1} \Rho^T$. 

A naive extension of the scalar method to this setup would be to allocate the bits between the correlations and apply the scalar (max or threshold) scheme $d$ times, using the fact that the model can also be written as
\begin{align}
Y = \rho_{\ell} (\vect{X})_{\ell} + \sqrt{1 - \rho_{\ell}^2} Z
\end{align}
for any $\ell \in [d]$. One could suggest to improve performance by having Alice locally perform some general linear operation on $\vect{X}$ before applying the scalar method, then having Bob perform the inverse operation. While this can indeed help for certain correlation values, it cannot improve the performance uniformly, even if the linear operation can depend on $\Sx$ (hence can e.g.  whiten $\vect{X}$). See Appendix~\ref{apn:SclrLinTrnsfsm} for details.

We now introduce an estimator that {\em does} dominate the scalar method. In fact, the mean squared error attained by this estimator is dictated by the single ``best'' entry of $\Rho$, namely by the highest correlation only. Our method is based on replacing the scalar one-dimensional threshold by $d$-dimensional {\em stopping sets} $A_1, \ldots, A_d \subset \mathbb{R}^d$. Similarly to the scalar case, Alice waits until $\vect{X}_i \in A_1$ for the first time, then again until $\vect{X}_i \in A_2$, and so on\footnote{The communication cost can be slightly improved if Alice first seeks $\vect{X}_i$ that lies in the union of all sets, then $\vect{X}_i$ that lies in the union of the remaining sets, and so on. The difference is negligible for small $\Pr(A_1), \ldots, \Pr(A_d)$.} until $\vect{X}_i \in A_d$.
Alice then describes the resulting indices $J_1, \ldots J_d$ to Bob using an optimal variable-rate prefix-free code of expected length equal to the entropy of the associated geometric distribution (again, we neglect the excess one bit). Defining Alice's corresponding sample matrix $\vect{X}_{\vect{J}} = [\vect{X}_{J_1}, \ldots, \vect{X}_{J_d}]\in\mathbb{R}^{d\times d}$, Alice creates some quantization  $\hat{\vect{X}}_{\vect{J}}$ of $\vect{X}_{\vect{J}}$, as further discussed below. Writing $Y_{\vect{J}} = [Y_{J_1},  \ldots, Y_{J_d}]\in\mathbb{R}^d$ for the corresponding sample vector on Bob's side, we consider the estimator 
\begin{align}\label{eq:twoPrmEst}
\hat{\Rho} = Y_{\vect{J}} \hat{\vect{X}}_{\vect{J}}^{-1} \Sx 
\end{align}
Note that in order to compute this estimator, Bob needs to know Alice's covariance matrix $\Sx$.  Recall however that we have assumed without loss of generality that this is in fact a correlation matrix, hence all its entries have absolute value at most $1$. Using a uniform quantizer of $[-1,1]$ with (say) $\sqrt{k}$ bits, each entry of this matrix can be described to Bob with a resolution of roughly $2^{-\sqrt{k}}$, using only $d^2\sqrt{k}$ bits overall. It is simple to check that this results in a negligible cost both in communication and in the mean squared error, and hence we disregard this issue below. 

The general task is the following. Given a specified average number of bits $k$, find some quantization scheme $\vect{X}_{\vect{J}} \to \hat{\vect{X}}_{\vect{J}}$ using $k_q$ bits per entry, and sets $A_1,  \ldots, A_d\in \mathbb{R}^d$, that 
\begin{align}\label{eq:xVecGenOpt1}
&\textnormal{minimize } \E\|Y_{\vect{J}} \hat{\vect{X}}_{\vect{J}}^{-1} \Sx  - \Rho\|^2 \\  \label{eq:xVecGenOpt2}
&\textnormal{subject to } \sum_{\ell=1}^{d} h_g(\Pr(\vect{X} \in A_{\ell})) + d^2\cdot k_q = k
\end{align}

Since the model~\eqref{eq:twoTwoModel} is linear with $d$ parameters, it is clear that we need at least $d$ different samples in order to obtain an estimator with a vanishing mean squared error. Furthermore, since Alice is given some control over the choice of $\vect{X}$ via her ability to pick samples from a large random set, it makes sense to try and make the problem as ``well-posed'' as possible, e.g., by striving to make the matrix $\vect{X}_{\vect{J}}$ have the smallest possible condition number while satisfying the communication constraints, which essentially dictate the number of samples we can choose from. A reasonable choice is therefore to try and make $\vect{X}_{\vect{J}}$ as diagonal as possible, by waiting each time for one coordinate to be strong and the others weak. 

To make the problem tractable we apply the rationale above to a whitened version of $\vect{X}$, which allows us to directly compute the stopping probability. Let 
\begin{align}
\vect{W} = \Sx^{-\frac{1}{2}} \vect{X}\sim \mathcal{N}(\vect{0},\vect{I}_{d})
\end{align}
be the whitened version of $\vect{X}$, and $\{\vect{W}_i = \Sx^{-\frac{1}{2}} \vect{X}_i\}_i$ the associated whitened samples. We define the stopping sets 
\begin{align}
A^w_{\ell} = \left\{\vect{w} \in \mathbb{R}^d \ :  |w_{\ell}|>a,\, |w_j|<b \ \forall \ j\neq \ell \right\},
\end{align}
and the corresponding time indices
\begin{align}
J^w_\ell = \min\{i > J^w_{\ell-1} : \vect{W}_i \in A^w_\ell\}
\end{align}
for $\ell\in[d]$, with $J^w_0 = 0$ by definition. 

Note that by construction, $\Pr(\vect{W}\in A^w_\ell)  = 2Q(a)(1-2Q(b))^{d-1}$ for any $\ell$. Alice then creates the matrix
\begin{align}
\vect{W}_{\vect{J}} = [\vect{W}_{J^w_1}, \ldots, \vect{W}_{J^w_d}] \in \mathbb{R}^{d\times d},
\end{align}
and transmits to Bob the indices $J^w_1, \ldots, J^w_d$ using 
\begin{align}\label{eq:xVecabkl}
 k_l = h_g \left( 2 Q(a)  (1 - 2Q(b))^{d-1} \right)
 \end{align} 
bits per index on average, and $\hat{\vect{W}}_{\vect{J}}$, which is a quantized version of $\vect{W}_{\vect{J}}$.

Note that in this method, in contrast to the ones considered thus far, Alice transmits to Bob some information regarding the actual values of her observations, rather than their locations alone. The reason is that the variance of the off-diagonal entries of $\vect{W}_{\vect{J}}$ do not vanish as $k$ gets large. Nevertheless, we will show that a very simple quantizer using only a negligible number of bits is enough to represent $\vect{W}_{\vect{J}}$ with sufficient accuracy for our purposes. Precisely, Alice quantizes $\vect{W}_{\vect{J}}$ using exactly $k_q$ bits per entry, as follows: The diagonal entries are truncated to a maximal absolute value of $c = \sqrt{3} a$, and the double segment $[-c,-a]\cup [a, c]$ is uniformly quantized into $2^{k_q}$ levels.   Off-diagonal entries, that all lie in the segment $[-b, b]$, are uniformly quantized into $2^{k_q}$ levels. 

Given a communication constraint of $k$ bits on average, we need to choose $k_l,k_q$ that satisfy
\begin{align}\label{eq:xVecTotBits}
d \cdot k_l + d^2 \cdot k_q = k,
\end{align}
and thresholds $a,b$ that satisfy \eqref{eq:xVecabkl}. Furthermore, for reasons explained in the proof of Theorem~\ref{thm:xvec}, we need both $a^2$ and $(a-b)^2$ to increase with $k_l$, and $k_q, k_l$ to satisfy $k_l = k(1/d - o(1))$ and $k_l 2^{-k_q} \rightarrow 0$. 
One such choice is 
\begin{align}
&k_l = \frac{1}{d} \left( \sqrt{k+1} - 1 \right)^2, \quad k_q = \sqrt{\frac{4 k_l}{d^3}},
\end{align}
and 
\begin{align}\label{eq:approxab}
a &= Q^{-1} \left(\frac{h_g^{-1}(k_l)}{2(1-2Q(b_0))^{d-1}}\right), \quad b = b_0, 
\end{align}
for some small fixed $b_0$.

After receiving $J^w_1, \ldots, J^w_d$ and $\hat{\vect{W}}_{\vect{J}}$, Bob creates the vector
 \begin{align}
 Y_{\vect{J}} = [Y_{J^w_1}, \ldots, Y_{J^w_d}]
 \end{align}
 and performs estimation. The model \eqref{eq:twoTwoModel}  can be written as
\begin{align}
Y = \Rho \Sx^{-\frac{1}{2}} \vect{W} +\sigma Z,
\end{align}
and thus the estimator is 
\begin{align}\label{eq:twotwoest}
\hat{\Rho} = Y_{\vect{J}} \hat{\vect{W}}_{\vect{J}}^{-1} \Sx^{\frac{1}{2}}.
\end{align} 
\begin{theorem}\label{thm:xvec}
	The estimator $\hat{\Rho}$ in~\eqref{eq:twotwoest} satisfies
	\begin{align}\label{eq:xVectrCov}
	\E \|\hat{\Rho} - \Rho\|^2  \le  \frac{1}{k} \left(\frac{d^2 }{2 \ln 2} \min_{\ell \in [d]} \{1 - \rho^2_{\ell}\} + o(1) \right),
	\end{align}
	where $k$ is the expected number of transmitted bits. Furthermore, $\hat{\Rho}$ is asymptotically efficient given $(\vect{W}_{\vect{J}}, Y_{\vect{J}})$.
\end{theorem}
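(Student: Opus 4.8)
The plan is to analyze the estimator in the whitened model $Y = \Rho\Sxsqm\vect{W} + \sigma Z$, with $\sigma^2 = 1-\Rho\Sx^{-1}\Rho^T$, and first to dispose of the ``$\le$'' in~\eqref{eq:xVectrCov}: since $\sigma^2 = \Var(Y\mid\vect{X})$ while $1-\rho_\ell^2 = \Var(Y\mid(\vect{X})_\ell)$, conditioning on the full vector can only reduce the conditional variance, so $\sigma^2\le\min_{\ell\in[d]}\{1-\rho_\ell^2\}$. It thus suffices to prove $\E\|\hat{\Rho}-\Rho\|^2 = \frac{d^2\sigma^2}{2k\ln 2}(1+o(1))$ and to identify this with the CRLB given $(\vect{W}_{\vect{J}},Y_{\vect{J}})$.

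The structural fact that drives everything is that the stopping indices $J^w_1,\dots,J^w_d$ are functions of $\{\vect{W}_i\}$ alone, so conditionally on $\vect{W}_{\vect{J}}$ (which also fixes the quantization $\hat{\vect{W}}_{\vect{J}}$) one has $Y_{\vect{J}} = \Rho\Sxsqm\vect{W}_{\vect{J}} + \sigma Z_{\vect{J}}$ with $Z_{\vect{J}}$ a vector of i.i.d.\ standard normals, hence the exact decomposition $\hat{\Rho}-\Rho = \sigma Z_{\vect{J}}\hat{\vect{W}}_{\vect{J}}^{-1}\Sxsq + \Rho\Sxsqm(\vect{W}_{\vect{J}}-\hat{\vect{W}}_{\vect{J}})\hat{\vect{W}}_{\vect{J}}^{-1}\Sxsq$ into a ``noise'' term and a ``quantization bias'' term. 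I would write $\vect{W}_{\vect{J}} = D_{\vect{J}}+B_{\vect{J}}$ with $D_{\vect{J}}$ the diagonal part, whose entries are i.i.d.\ copies of a standard normal conditioned to exceed $a$ in modulus, and $B_{\vect{J}}$ the off-diagonal part, every entry of which is bounded by the fixed $b=b_0$. Since $a\to\infty$ (hence also $a-b\to\infty$), $\vect{W}_{\vect{J}}$ is eventually strictly diagonally dominant, $\|D_{\vect{J}}^{-1}\| = O(1/a)$, and a Neumann expansion gives $\vect{W}_{\vect{J}}^{-1} = D_{\vect{J}}^{-1}+O(1/a^2)$ in operator norm; truncating the diagonal quantizer at $c=\sqrt{3}\,a$ keeps $\hat{\vect{W}}_{\vect{J}}$ diagonally dominant as well, and since $k_l 2^{-k_q}\to 0$ the extra perturbation of $\hat{\vect{W}}_{\vect{J}}^{-1}$ coming from the uniform quantizer is of smaller order than $1/a^2$.

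For the noise term, integrating out $Z_{\vect{J}}$ first gives $\E\big[\|\sigma Z_{\vect{J}}\hat{\vect{W}}_{\vect{J}}^{-1}\Sxsq\|^2\mid\vect{W}_{\vect{J}}\big] = \sigma^2\tr(\hat{\vect{W}}_{\vect{J}}^{-1}\Sx\hat{\vect{W}}_{\vect{J}}^{-T})$, which by the perturbation bounds above equals $\sigma^2\sum_{\ell}D_{\ell\ell}^{-2} + o(d/a^2)$ using $(\Sx)_{\ell\ell}=1$. From the expansion~\eqref{eq:tailMeanExp} (equivalently $\E[Z^{-2}\mid|Z|>a] = s(a)/a-1$) one gets $\E[D_{\ell\ell}^{-2}] = a^{-2}(1+o(1))$, so the noise term contributes $d\sigma^2 a^{-2}(1+o(1))$; the quantization bias term has squared norm $O(2^{-2k_q}) = o(1/k)$ and is thus negligible, as is the cross term since the noise part is conditionally mean zero. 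Finally $a$ is tied to $k_l$ by $k_l = h_g\big(2Q(a)(1-2Q(b_0))^{d-1}\big) = -\log Q(a)\,(1+o(1)) = \frac{a^2}{2\ln 2}(1+o(1))$, while the prescribed $(k_l,k_q)$ satisfy $d k_l + d^2 k_q = k$ with $k_l = \frac{k}{d}(1-o(1))$, so $a^{-2} = \frac{d}{2k\ln 2}(1+o(1))$ and $\E\|\hat{\Rho}-\Rho\|^2 = \frac{d^2\sigma^2}{2k\ln 2}(1+o(1))\le\frac{d^2}{2k\ln 2}\min_{\ell}\{1-\rho_\ell^2\}(1+o(1))$.

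For asymptotic efficiency I would evaluate $\fisher_{\vect{W}_{\vect{J}},Y_{\vect{J}}}(\Rho)$ through~\eqref{eq:generalFisher2} and~\eqref{eq:generalFisher}: conditionally on $\vect{W}_{\vect{J}}$ one has $Y_{\vect{J}}^{T}\sim\mathcal{N}(\vect{W}_{\vect{J}}^{T}\Sxsqm\Rho^{T},\sigma^2\vect{I}_d)$, so the mean contribution to the Fisher matrix is $\sigma^{-2}\Sxsqm\vect{W}_{\vect{J}}\vect{W}_{\vect{J}}^{T}\Sxsqm$ and the covariance contribution (present because $\sigma^2$ depends on $\Rho$) is only $O(1)$. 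Because the stopping sets are product sets symmetric in each coordinate, $\E[\vect{W}_{\vect{J}}\vect{W}_{\vect{J}}^{T}] = \big(1+a\,s(a)+(d-1)\E[Z^2\mid|Z|<b_0]\big)\vect{I}_d = a^2(1+o(1))\vect{I}_d$, whence $\fisher_{\vect{W}_{\vect{J}},Y_{\vect{J}}}(\Rho) = \frac{a^2}{\sigma^2}\Sx^{-1}(1+o(1))$ and its inverse $\frac{\sigma^2}{a^2}\Sx(1+o(1))$ coincides with $\Cov\hat{\Rho}^{T} = \sigma^2\Sxsq\,\E[D_{\vect{J}}^{-2}]\,\Sxsq(1+o(1))$ obtained from the noise analysis, giving efficiency for every linear functional (asymptotic unbiasedness following since the bias is $O(2^{-k_q}) = o(1/\sqrt{k})$). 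The main obstacle is the perturbation bookkeeping of the third paragraph: one must control simultaneously the non-vanishing off-diagonal block in the matrix inversion, the uniform quantizer, and the rare event that a diagonal entry exceeds $c=\sqrt{3}\,a$ --- on which $\hat{\vect{W}}_{\vect{J}}$ approximates $\vect{W}_{\vect{J}}$ poorly but, thanks to truncation, remains bounded and well-conditioned, so that a Cauchy--Schwarz estimate against its exponentially small probability keeps its contribution $o(1/k)$ --- all while verifying that none of these corrections, nor the $O(1)$ covariance part of the Fisher information, perturbs the leading constant.
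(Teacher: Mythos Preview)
Your proposal is correct and shares the high-level architecture of the paper's proof: reduce~\eqref{eq:xVectrCov} to an asymptotic formula with $\sigma^2$ in place of $\min_\ell\{1-\rho_\ell^2\}$ via the conditioning inequality $\sigma^2=\Var(Y\mid\vect{X})\le\Var(Y\mid(\vect{X})_\ell)$, control the quantization error separately, and match the noise variance to the inverse Fisher information computed from~\eqref{eq:generalFisher}--\eqref{eq:generalFisher2}.

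Where you diverge is in the matrix-inversion step. You approximate $\vect{W}_{\vect{J}}^{-1}$ (and then $\hat{\vect{W}}_{\vect{J}}^{-1}$) by a Neumann expansion around the diagonal part $D_{\vect{J}}$, and then compute $\E[D_{\ell\ell}^{-2}]=s(a)/a-1=a^{-2}(1+o(1))$ directly. The paper instead proceeds structurally: it first passes to the unrealizable estimator $\hat{\Rho}_0=Y_{\vect{J}}\vect{W}_{\vect{J}}^{-1}\Sxsq$ and bounds $\E\|\hat{\Rho}-\Rho\|^2-\E\|\hat{\Rho}_0-\Rho\|^2$ once and for all (Lemma~\ref{lm:lemma1}); then it observes that $\vect{W}_{\vect{J}}$ is equal in law to $P\vect{W}_{\vect{J}}P^T$ for every signed permutation $P$, which forces $\E\big[(\vect{W}_{\vect{J}}\vect{W}_{\vect{J}}^T)^{-1}\big]=\beta\,\vect{I}_d$ \emph{exactly} (Lemma~\ref{lm:matrices}), so that $\Cov\hat{\Rho}_0=\beta\sigma^2\Sx$ with no perturbation remainder; finally it sandwiches $\beta$ between $\alpha^{-1}$ and $(a-(d-1)b)^{-2}$ using Jensen and Johnson's Gershgorin-type singular-value bound (Lemmas~\ref{lm:johnson}--\ref{lm:auxilarry}). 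Your route trades this symmetry-plus-singular-value argument for a purely analytic perturbation calculation; the price is the bookkeeping you flag in your last paragraph (off-diagonal remainder, quantizer, truncation event), whereas the paper's price is the two auxiliary lemmas. Both reach the same leading constant, and your exact identity $\E[Z^{-2}\mid Z>a]=s(a)/a-1$ is a nice touch that the paper does not need because the squeeze on $\beta$ already gives $\beta=a^{-2}(1+o(1))$.
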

We prove this theorem in the next subsection. 
\begin{corollary}
	$\hat{\Rho}$ in~\eqref{eq:twotwoest} dominates the scalar estimator. 	
\end{corollary}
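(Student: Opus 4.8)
The plan is to compare the upper bound of Theorem~\ref{thm:xvec} with the best performance achievable by the scalar scheme, and to verify that the former is never worse, for every value of $\Rho$; since the scalar scheme is defined for all correlation values, this is exactly what it means to dominate it. Recall that the scalar method of Section~\ref{s:singleCorr}, applied here, splits the budget of $k$ bits among the $d$ correlations: if $k_\ell$ bits are devoted to estimating $\rho_\ell$ from $((\vect{X})_\ell, Y)$ with the threshold estimator of Theorem~\ref{thrm:stEst} --- and since $Y = \rho_\ell (\vect{X})_\ell + \sqrt{1-\rho_\ell^2}\,Z$, we are exactly in the setting of that theorem --- then $\Var \hat\rho_\ell = \tfrac{1}{k_\ell}\big(\tfrac{1-\rho_\ell^2}{2\ln 2}+o(1)\big)$, so the resulting estimator of $\Rho$ has total mean squared error $\sum_{\ell=1}^{d}\tfrac{1-\rho_\ell^2}{2 k_\ell \ln 2}(1+o(1))$.

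First I would instantiate this at the uniform allocation $k_\ell = k/d$, giving the benchmark $\tfrac{d}{k}\big(\tfrac{1}{2\ln 2}\sum_{\ell=1}^{d}(1-\rho_\ell^2) + o(1)\big)$ (here the $d$-fold sum of per-coordinate $o(1)$ terms is still $o(1)$ because $d$ is fixed). If one prefers the strongest scalar competitor, minimizing $\sum_\ell (1-\rho_\ell^2)/k_\ell$ subject to $\sum_\ell k_\ell = k$ yields, by Cauchy--Schwarz, the smaller benchmark $\tfrac{1}{2k\ln 2}\big(\sum_{\ell=1}^{d}\sqrt{1-\rho_\ell^2}\big)^2(1+o(1))$; the comparison below handles this tighter version verbatim.

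Next I would invoke Theorem~\ref{thm:xvec}, which bounds $\E\|\hat{\Rho}-\Rho\|^2$ by $\tfrac{1}{k}\big(\tfrac{d^2}{2\ln 2}\min_{\ell\in[d]}\{1-\rho_\ell^2\}+o(1)\big)$, and conclude using the elementary inequalities $d\cdot\min_{\ell}\{1-\rho_\ell^2\}\le \sum_{\ell}(1-\rho_\ell^2)$ and, for the tighter benchmark, $d^2\min_{\ell}\{1-\rho_\ell^2\}\le \big(\sum_\ell \sqrt{1-\rho_\ell^2}\big)^2$ (the latter because $\sum_\ell\sqrt{1-\rho_\ell^2}\ge d\sqrt{\min_\ell\{1-\rho_\ell^2\}}$). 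In both cases the leading-order coefficient of the bound on $\E\|\hat{\Rho}-\Rho\|^2$ does not exceed that of the scalar benchmark, for every $\Rho$, which establishes the corollary.

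I do not expect a genuine obstacle here: the content is entirely in matching the two leading-order expressions via the above inequalities. The only care needed is to state the conclusion in the same asymptotic sense used elsewhere in the paper --- domination up to an additive $o(1/k)$ term, for each fixed $\Rho$ --- so that comparing the leading coefficients suffices, and to keep track of the (harmless) accumulation of $o(1)$ terms over the $d$ coordinates.
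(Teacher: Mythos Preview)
Your proposal is correct and follows essentially the same approach as the paper's proof: both compute the scalar benchmark and compare the leading coefficient $d^2\min_{\ell}\{1-\rho_\ell^2\}$ from Theorem~\ref{thm:xvec} against it via the elementary inequality that the minimum is no larger than any (weighted) average. Your handling of the nonuniform allocation via Cauchy--Schwarz is slightly more explicit than the paper's remark that ``a weighted average is always greater than the minimum,'' but the argument is otherwise identical.
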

\begin{proof}
	Allocating $k/d$ bits per correlation and using the scalar estimator (max or threshold),  results in a sum of variances
	\begin{align}\label{eq:naiveSclrVar}
	\frac{1}{k} \left(\frac{d^2 }{2 \ln 2} \frac{1}{d} \sum_{\ell \in [d]} (1 - \rho^2_{\ell}) + o(1) \right)
	\end{align}
	which is greater than \eqref{eq:xVectrCov} for all values of $\rho_1, \ldots, \rho_d$ (except when they are all equal). One could also use a nonuniform bit allocation for the scalar estimation, in which case the average in \eqref{eq:naiveSclrVar} would be replaced by a weighted average, which also is always greater than the minimum.
\end{proof}

\begin{remark}
	Theorem~\ref{thm:xvec} implies in particular that when (say) $|\rho_1|=1$, then the variance of our estimator decays faster than $\Omega(1/k)$. This is intuitively reasonable, since in this case $Y$ is equal to $\pm X_1$, hence $\Sx$ itself provides all the information about $\Rho$, which can be locally computed by Alice and communicated to Bob with variance of $2^{-\Omega(k)}$. Note however that Alice {\em cannot know} that $|\rho_1|=1$, and neither can Bob (though he may have good reason to suspect so), hence it is still a bit surprising that our estimator allows this situation to nevertheless be exploited. 
\end{remark}
\begin{remark}
It is interesting to compare the performance of the estimator discussed in this subsection, to the performance of the estimator in the other extremal setup of Subsection~\ref{subsec:xscalaryvec}, where $X$ is a scalar and $\vect{Y}$ is a vector. While both dominate the naive scheme of applying the scalar method $d$ times, neither dominates the other. The difference between them, essentially, is the difference between $\sum (1 - \rho_{\ell}^2)/d$ and $d \min \{1 - \rho_{\ell}^2\}$. For example, the former outperforms the latter if all correlations are equal, whereas the latter outperforms the former if any of the correlations is $\pm 1$. 
\end{remark}

\subsection{Proof of Theorem~\ref{thm:xvec}}
Consider the estimator
\begin{align}\label{eq:unrlzblEst}
\hat{\Rho}_0 = Y_{\vect{J}} \vect{W}_{\vect{J}}^{-1} \Sx^{\frac{1}{2}}.
\end{align}
Note that this estimator uses the non-quantized $\vect{W}_{\vect{J}}$ which cannot be described to Bob with a finite number of bits, and hence is unrealizable. Nevertheless, as the following lemma shows, the loss incurred by employing $\hat{\Rho}$ instead, which uses the quantized $\vect{W}_{\vect{J}}$, is small. 
\begin{lemma}\label{lm:lemma1}
	For any $a,b$ such that $a > d (b + 1)$, 
	\begin{align}
	\E\|\hat{\Rho} - \Rho\|^2 \le \E\|\hat{\Rho}_0 - \Rho\|^2 + (2d)^6  \left( e^{-\frac{a^2}{2}}  +  2^{-k_q} \right)
	\end{align}
where $k_q$ bits are used to represent each entry in $\hat{\vect{W}}_{\vect{J}}$.
\end{lemma}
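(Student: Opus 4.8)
The plan is to pin down exactly how the realizable estimator $\hat{\Rho}$ of \eqref{eq:twotwoest} differs from the unrealizable $\hat{\Rho}_0$ of \eqref{eq:unrlzblEst}, and to show this difference is governed by the $L^2$-size of the quantization error $\Delta \triangleq \hat{\vect W}_{\vect J} - \vect W_{\vect J}$. Writing the whitened model at the stopping indices as $Y_{\vect J} = \Rho \Sx^{-1/2}\vect W_{\vect J} + \sigma \vect Z_{\vect J}$, with $\vect Z_{\vect J} = [Z_{J^w_1},\dots,Z_{J^w_d}]$, and substituting together with $\vect W_{\vect J} = \hat{\vect W}_{\vect J} - \Delta$, one gets the identities
\begin{align*}
\hat{\Rho}_0 - \Rho = \sigma\, \vect Z_{\vect J}\,\vect W_{\vect J}^{-1}\Sx^{1/2}, \qquad
\hat{\Rho} - \Rho = \underbrace{-\,\Rho\Sx^{-1/2}\Delta\,\hat{\vect W}_{\vect J}^{-1}\Sx^{1/2}}_{=:\,B} \;+\; \underbrace{\sigma\, \vect Z_{\vect J}\,\hat{\vect W}_{\vect J}^{-1}\Sx^{1/2}}_{=:\,C}.
\end{align*}
Here $B$ is a function of the whitened sequence $\{\vect W_i\}$ alone, whereas conditioned on $\{\vect W_i\}$ the indices $J^w_1,\dots,J^w_d$ are fixed and distinct, so $\vect Z_{\vect J}$ is a vector of i.i.d.\ standard normals with $\E[\vect Z_{\vect J}\mid\{\vect W_i\}] = \vect 0$ and $\E[\vect Z_{\vect J}^T\vect Z_{\vect J}\mid\{\vect W_i\}] = \vect I_d$. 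Hence $\E\langle B, C\rangle = 0$, the $B$--$C$ cross term in $\E\|\hat\Rho-\Rho\|^2$ vanishes, and
\begin{align*}
\E\|\hat{\Rho}-\Rho\|^2 - \E\|\hat{\Rho}_0-\Rho\|^2 \;=\; \E\|B\|^2 \;+\; \sigma^2\,\E\,\tr\bigl(\hat{\vect W}_{\vect J}^{-1}\Sx\,\hat{\vect W}_{\vect J}^{-T} - \vect W_{\vect J}^{-1}\Sx\,\vect W_{\vect J}^{-T}\bigr).
\end{align*}
It therefore suffices to bound each of these two terms by a constant depending only on $d$ times $e^{-a^2/2} + 2^{-k_q}$.

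Next I would collect the deterministic norm estimates bought by the hypothesis $a > d(b+1)$. By construction the diagonal entries of both $\vect W_{\vect J}$ and $\hat{\vect W}_{\vect J}$ have modulus at least $a$ and the off-diagonal entries have modulus at most $b$; since $a > d(b+1)$ forces $(d-1)b < a(d-1)/d$, both matrices and their transposes are strictly diagonally dominant with row gap exceeding $a/d$, hence $\|\vect W_{\vect J}^{-1}\|, \|\hat{\vect W}_{\vect J}^{-1}\| \le d/a$. Also $\|\Sx^{1/2}\| \le \sqrt d$ because $\Sx$ is a $d\times d$ correlation matrix, and $\|\Rho\Sx^{-1/2}\|^2 = \Rho\Sx^{-1}\Rho^T = 1-\sigma^2 \le 1$. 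For the quantization error itself, each off-diagonal entry of $\Delta$ is deterministically at most $b\,2^{-k_q}$ in modulus, while each diagonal entry is either a uniform quantization error of modulus at most $(\sqrt3-1)\,a\,2^{-k_q}$ or, on the event that $|(\vect W_{J^w_\ell})_\ell|>c=\sqrt3\,a$, the truncation error $|(\vect W_{J^w_\ell})_\ell|-c$. Since $(\vect W_{J^w_\ell})_\ell$ is a standard normal conditioned on $|(\vect W_{J^w_\ell})_\ell|>a$, the second moment of the latter equals $\frac{1}{Q(a)}\int_c^\infty (w-c)^2\phi(w)\,dw$, which, using $\int_c^\infty w^2\phi(w)\,dw \le (c+1)\phi(c)$ and $Q(a)\ge \phi(a)/(2a)$, is $O(a^2 e^{-a^2})$. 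Since $a>d(b+1)$ also gives $b<a/d$, summing over the $d^2$ entries yields $\E\|\Delta\|^2 \le \E\|\Delta\|_F^2 = O\bigl(d\,a^2(2^{-2k_q}+e^{-a^2})\bigr)$.

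It remains to assemble the bound. For the first term, $\|B\| \le \|\Rho\Sx^{-1/2}\|\,\|\Delta\|\,\|\hat{\vect W}_{\vect J}^{-1}\|\,\|\Sx^{1/2}\| \le (d^{3/2}/a)\,\|\Delta\|$, so the factor $1/a$ coming from the well-conditioned inverse cancels the factor $a$ produced by the diagonal quantization step, and $\E\|B\|^2 = O\bigl(d^4(2^{-2k_q}+e^{-a^2})\bigr)$. For the trace term, the resolvent identity $\hat{\vect W}_{\vect J}^{-1}-\vect W_{\vect J}^{-1} = -\hat{\vect W}_{\vect J}^{-1}\Delta\,\vect W_{\vect J}^{-1}$ together with the product rule give $\|\hat{\vect W}_{\vect J}^{-1}\Sx\,\hat{\vect W}_{\vect J}^{-T}-\vect W_{\vect J}^{-1}\Sx\,\vect W_{\vect J}^{-T}\| = O\bigl(d^4\|\Delta\|/a^3\bigr)$; bounding $|\tr(\cdot)|$ by $d$ times the operator norm and using $\E\|\Delta\| \le (\E\|\Delta\|^2)^{1/2} = O\bigl(\sqrt d\,a\,(2^{-k_q}+e^{-a^2/2})\bigr)$ shows the trace term is at most $\sigma^2\,\E|\tr(\cdot)| = O\bigl(d^{6}(2^{-k_q}+e^{-a^2/2})/a^2\bigr)$. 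Adding the two contributions and using $e^{-a^2}\le e^{-a^2/2}$, $2^{-2k_q}\le 2^{-k_q}$, $a>1$ and $\sigma^2\le 1$, the implied constants fit comfortably inside $(2d)^6$. The one genuinely delicate point, I expect, is exactly this final bookkeeping: tracking the cancellation of the $a$-factors so that the bound comes out clean in $2^{-k_q}$ rather than in $a^2 2^{-2k_q}$, and handling the \emph{unbounded} diagonal entries of $\vect W_{\vect J}$ through the truncated-Gaussian tail; the rest is the identities above plus elementary $Q$-function estimates.
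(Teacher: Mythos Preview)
Your proposal is correct and follows essentially the same route as the paper's proof: the same decomposition into a ``$\Rho\Sx^{-1/2}\Delta\hat{\vect W}_{\vect J}^{-1}\Sx^{1/2}$'' term and a ``$\sigma^2\tr(\hat{\vect W}_{\vect J}^{-1}\Sx\hat{\vect W}_{\vect J}^{-T}-\vect W_{\vect J}^{-1}\Sx\vect W_{\vect J}^{-T})$'' term, the same smallest-singular-value bound $\sigma_{\min}\ge a-(d-1)b>a/d$ under the hypothesis $a>d(b+1)$, and the same $\E\|\Delta\|_F^2 = O(da^2(e^{-a^2}+2^{-2k_q}))$ estimate via the truncated-Gaussian tail at $c=\sqrt3\,a$. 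The only cosmetic differences are that you reach the decomposition via the cleaner orthogonality $\E\langle B,C\rangle=0$ (the paper instead expands $\|\hat\Rho-\hat\Rho_0+\hat\Rho_0-\Rho\|^2$ and computes the cross term explicitly), and that you work with operator norms and diagonal dominance where the paper uses Frobenius norms and Johnson's singular-value lemma; the resulting constants fit under $(2d)^6$ either way.
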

\begin{proof}
	See Appendix~\ref{apn:lm1Proof}.
\end{proof}
The estimator \eqref{eq:unrlzblEst} can be written as 
\begin{align}
\hat{\Rho}_0 = \Rho  +\sigma Z_{\vect{J}} \vect{W}_{\vect{J}}^{-1} \Sx^{\frac{1}{2}}.
\end{align}
where $Z_{\vect{J}} = [Z_{J^w_1}, \ldots, Z_{J^w_d}] \sim \mathcal{N}(\vect{0}, \vect{I}_d)$ is independent of $\vect{W}_{\vect{J}}$. It follows that $\hat{\Rho}_0$ is \emph{unbiased} with 
\begin{align}\label{eq:xVecCov}
\Cov \hat{\Rho}_0 = \sigma^2  \Sx^{\frac{1}{2}} \E \left( (\vect{W}_{\vect{J}} \vect{W}_{\vect{J}}^T)^{-1} \right) \Sx^{\frac{1}{2}}.
\end{align}
In view of Lemma~\ref{lm:lemma1}, it is sufficient to analyze the performance of the unrealizable estimator $\hat{\Rho}_0$.
For the purpose of analyzing $\hat{\Rho}_0$ only, we can assume that Bob is given the value of $\vect{W}_{\vect{J}}$ for free, and the only cost is in transmitting the indices $J^w_1, \ldots, J^w_d$. Given $k_l$ bits for the representation of each of the locations, our general goal is to find $a,b$ that 
\begin{align}\label{eq:xVecOpt1}
&\textnormal{minimize } \tr \Cov \hat{\Rho}_0\\ \label{eq:xVecOpt2}
&\textnormal{subject to }  h_g(2Q(a)(1-2Q(b))^{d-1}) = k_l.
\end{align}
Before proceeding to the analysis of $\hat{\Rho}_0$, we need the following two technical lemmas.
\begin{lemma}\label{lm:matrices}
	Let $\vect{M}$ be a square random matrix with independent entries, where the diagonal entries are i.i.d. with one distribution, and the off-diagonal entries are i.i.d. with another, symmetric distribution. Then $\E \vect{M}$, $\E \vect{M} \vect{M}^T$ and $\E \left( (\vect{M} \vect{M}^T)^{-1} \right)$ are scalar multiples of the identity matrix.
\end{lemma}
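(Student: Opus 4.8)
The plan is to exploit the symmetry of the problem under signed permutations of coordinates. The key observation is that the hypotheses on $\vect{M}$ — i.i.d. diagonal entries, i.i.d. \emph{symmetric} off-diagonal entries, all mutually independent — make the distribution of $\vect{M}$ invariant under a large enough group of transformations to force the three matrices in question to commute with that group, and the only matrices that commute with it are scalar multiples of $\vect I_d$.

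**First I would** set up the group action. Let $\pi$ be any permutation matrix and let $D$ be any diagonal sign matrix (entries $\pm 1$). I claim $D\pi \vect{M} \pi^T D$ has the same distribution as $\vect{M}$. Indeed, conjugating by $\pi$ permutes the diagonal entries among themselves and the off-diagonal entries among themselves, which changes nothing since each family is i.i.d.; conjugating by $D$ multiplies the off-diagonal entry $(i,j)$ by $(D)_{ii}(D)_{jj}=\pm1$ and fixes the diagonal, and since the off-diagonal entries are symmetric and independent this also preserves the joint distribution. Hence for $G := D\pi$ (any such product), $G\vect{M}G^{-1} \stackrel{d}{=} \vect{M}$, using $G^{-1}=G^T=\pi^T D$.

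**Next I would** push this invariance through the three expectations. Since the map $\vect M \mapsto \vect M$ commutes with the operations involved, $G (\E \vect M) G^{-1} = \E( G\vect M G^{-1}) = \E \vect M$; similarly $G(\E \vect M\vect M^T)G^{-1} = \E\big((G\vect M G^{-1})(G\vect M G^{-1})^T\big) = \E \vect M \vect M^T$ because $G^{-1}=G^T$ so $G\vect M\vect M^TG^T = (G\vect M G^T)(G\vect M G^T)^T$; and likewise $G \E\big((\vect M\vect M^T)^{-1}\big) G^{-1} = \E\big((G\vect M\vect M^T G^T)^{-1}\big) = \E\big((\vect M\vect M^T)^{-1}\big)$. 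So each of the three matrices, call a generic one $\vect B$, satisfies $G\vect B G^{-1}=\vect B$, i.e. $G\vect B = \vect B G$, for every $G=D\pi$.

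**The last step** is the linear-algebra fact: a matrix $\vect B$ commuting with all permutation matrices and all diagonal sign matrices must be a scalar multiple of $\vect I_d$. Commuting with all diagonal sign matrices forces $\vect B$ to be diagonal (taking $D$ with a single $-1$ in position $i$ gives $(D\vect B D)_{ij} = -(\vect B)_{ij}$ for $j\neq i$, so $(\vect B)_{ij}=0$); commuting with the permutation matrix swapping $i$ and $j$ then forces $(\vect B)_{ii}=(\vect B)_{jj}$, so all diagonal entries are equal. Hence $\vect B = c\,\vect I_d$. Applying this to each of $\E\vect M$, $\E\vect M\vect M^T$, $\E\big((\vect M\vect M^T)^{-1}\big)$ gives the claim. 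I do not expect any real obstacle here; the only mild care needed is the use of symmetry (not merely identical distribution) of the off-diagonal entries to justify sign-flip invariance, and noting that $(\vect M\vect M^T)^{-1}$ exists almost surely (true in our application where $\vect W_{\vect J}$ has a density) so the expectation is well-defined.
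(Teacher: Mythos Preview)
Your proposal is correct and follows essentially the same approach as the paper: both arguments use the group of signed permutation matrices, show that conjugation by any such matrix preserves the distribution of $\vect{M}$ (using the i.i.d.\ structure and the symmetry of the off-diagonals), push this invariance through to the three expectations, and conclude via the elementary fact that a matrix invariant under all signed-permutation conjugations must be a scalar multiple of $\vect I_d$. Your presentation separates the group-theoretic commutation argument from the concluding linear-algebra fact a bit more cleanly, but the substance is identical.
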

\begin{proof}
	The claim for $\E \vect{M}$ and $\E \vect{M} \vect{M}^T$ is trivial. For $\E \left( (\vect{M} \vect{M}^T)^{-1} \right)$ see Appendix~\ref{apn:mtrcsProof}.
\end{proof}
\begin{lemma}[Johnson \cite{johnson1989gersgorin}] \label{lm:johnson}
	For any $n$-by-$m$ matrix $B = (b_{ij})$, $n \le m$, the smallest singular value is bounded below by
	\begin{align}
	\min_{i \in [n]} \left\lbrace  |b_{ii}| - \frac{1}{2} \left( \sum_{j \in [n] \setminus i} |b_{ij}| + \sum_{j \in [n] \setminus i} |b_{ji}|  \right) \right\rbrace
\end{align}
\end{lemma}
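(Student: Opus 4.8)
The plan is to reduce the claim to a Gershgorin bound on the smallest eigenvalue of a symmetric matrix, in three moves. First I would dispose of the extra columns: writing $B = [B_1 \mid B_2]$ with $B_1 \in \mathbb{R}^{n\times n}$ the leading square block and $B_2 \in \mathbb{R}^{n\times(m-n)}$, note that $BB^T = B_1 B_1^T + B_2 B_2^T \succeq B_1 B_1^T$, so $\lambda_{\min}(BB^T) \ge \lambda_{\min}(B_1 B_1^T)$ and hence the smallest singular values satisfy $\sigma_{\min}(B) \ge \sigma_{\min}(B_1)$. Since the right-hand side of the lemma involves only the entries $b_{ij}$ with $i,j\in[n]$ -- that is, the entries of $B_1$ -- it suffices to prove the bound for the square matrix $A := B_1$.

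For the square case I would exploit the identity $\sigma_{\min}(A) = \min_{\|x\|=1}\|Ax\|$ together with the elementary observation that $\|Ax\| \ge \langle x, Ax\rangle = x^T A x$ for unit $x$ (Cauchy--Schwarz), where $x^T A x = x^T H x$ with $H = \tfrac12(A + A^T)$ the symmetric part (the antisymmetric part contributes nothing to the quadratic form). Hence $\|Ax\| \ge x^T H x \ge \lambda_{\min}(H)$ for every unit $x$, giving the key inequality $\sigma_{\min}(A) \ge \lambda_{\min}(H)$ and reducing everything to lower-bounding the smallest eigenvalue of the symmetric matrix $H$.

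Before invoking Gershgorin I would align the diagonal signs: replacing $A$ by $DA$ with $D = \operatorname{diag}(\operatorname{sgn}(a_{11}),\ldots,\operatorname{sgn}(a_{nn}))$ leaves all singular values unchanged (as $D$ is orthogonal) and preserves every $|a_{ij}|$, while making the diagonal entries equal to $|a_{ii}|\ge 0$; in particular it does not alter the quantities $|b_{ii}|$, $R_i := \sum_{j\neq i}|b_{ij}|$ and $C_i := \sum_{j\neq i}|b_{ji}|$ appearing in the bound. For the symmetric part of this normalized matrix, $h_{ii} = |a_{ii}|$ and $|h_{ij}| \le \tfrac12(|a_{ij}| + |a_{ji}|)$, so $\sum_{j\neq i}|h_{ij}| \le \tfrac12(R_i + C_i)$. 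Gershgorin's circle theorem, applied to the real symmetric $H$ (whose eigenvalues are real), places every eigenvalue in $\bigcup_i [\,|a_{ii}| - \sum_{j\ne i}|h_{ij}|,\ |a_{ii}| + \sum_{j\ne i}|h_{ij}|\,]$, whence $\lambda_{\min}(H) \ge \min_i\{|a_{ii}| - \tfrac12(R_i + C_i)\}$. Chaining the two reductions gives $\sigma_{\min}(B) \ge \sigma_{\min}(A) \ge \lambda_{\min}(H) \ge \min_i\{|a_{ii}| - \tfrac12(R_i+C_i)\}$, which is exactly the claimed bound (and trivially true when the right-hand side is nonpositive).

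The step I expect to be the crux is producing the symmetric combination $\tfrac12(R_i + C_i)$ with its factor of $\tfrac12$: a naive argument through a single singular-vector equation yields only a one-sided bound such as $\sigma_{\min}(A) \ge |a_{ii}| - C_i$ (or $-R_i$), which is strictly weaker. The symmetric-part device is precisely what balances the row and column contributions and supplies the $\tfrac12$, since $|h_{ij}| = \tfrac12|a_{ij}+a_{ji}| \le \tfrac12(|a_{ij}|+|a_{ji}|)$; the sign normalization by $D$ is the small but essential ingredient ensuring the Gershgorin centers are $|a_{ii}|$ rather than $a_{ii}$, so that the bound features $|b_{ii}|$ as stated.
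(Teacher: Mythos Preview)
The paper does not prove this lemma; it simply cites Johnson's result and uses it as a black box. Your argument is correct: the reduction to the leading square block via $BB^T = B_1B_1^T + B_2B_2^T \succeq B_1B_1^T$, the inequality $\sigma_{\min}(A)\ge\lambda_{\min}\bigl(\tfrac12(A+A^T)\bigr)$ obtained from $\|Ax\|\ge x^TAx$ for unit $x$, the diagonal sign normalization by $D$, and the Gershgorin step on the symmetric part all go through as stated, and the passage through $H$ is precisely what produces the averaged row/column radius $\tfrac12(R_i+C_i)$. This is, in fact, essentially Johnson's own argument in the cited reference.
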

The following lemma provides a simplified expression and bounds for $\E \left( (\vect{W}_{\vect{J}} \vect{W}_{\vect{J}}^T)^{-1} \right)$, that will aid in proving Proposition~\ref{pr:prop1} below.
\begin{lemma}\label{lm:auxilarry} The following claims hold for 
	 \begin{align}
	&\alpha  = d^{-1}\tr \E \vect{W}_{\vect{J}} \vect{W}_{\vect{J}}^T, \;\;\beta = d^{-1}\tr \E \left( (\vect{W}_{\vect{J}} \vect{W}_{\vect{J}}^T)^{-1} \right). 
	 \end{align}
	\begin{enumerate}[label=(\roman*)]
		\item $\E \vect{W}_{\vect{J}} \vect{W}_{\vect{J}}^T = \alpha \vect{I}_d$ \label{lmi:itm1}
		\item $\E \left( (\vect{W}_{\vect{J}} \vect{W}_{\vect{J}}^T)^{-1} \right) = \beta \vect{I}_d$ \label{lmi:itm2}
		\item For any $a,b$ such that $a > (d-1)b$,
\begin{align}\label{eq:auxLemmBnds}
(a^2 + d + 1)^{-1} \le \alpha^{-1} \le \beta \le (a - (d-1)b)^{-2}.
\end{align}
\vspace{-10pt}
\label{lmi:itm3}
\end{enumerate}
\end{lemma}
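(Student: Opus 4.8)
The plan is to first identify the distribution of the concomitant matrix $\vect{W}_{\vect{J}}$, then obtain items~\ref{lmi:itm1} and~\ref{lmi:itm2} as an immediate consequence of Lemma~\ref{lm:matrices}, and finally establish the three inequalities of item~\ref{lmi:itm3} by a short moment computation, a Jensen-type argument, and Johnson's singular-value bound, respectively.

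First I would argue that $\vect{W}_{\vect{J}}$ is a random matrix with independent entries, in which the diagonal entries are i.i.d.\ standard normals conditioned on $\{|\cdot|>a\}$ and the off-diagonal entries are i.i.d.\ standard normals conditioned on $\{|\cdot|<b\}$, both laws being symmetric about zero. Indeed, since $\{\vect{W}_i\}$ is an i.i.d.\ $\mathcal{N}(\vect{0},\vect{I}_d)$ sequence and $J^w_1<\cdots<J^w_d$ are stopping times, the strong Markov property implies that the columns $\vect{W}_{J^w_1},\ldots,\vect{W}_{J^w_d}$ are mutually independent, with $\vect{W}_{J^w_\ell}$ distributed as $\vect{W}$ conditioned on $\vect{W}\in A^w_\ell$; and because $A^w_\ell$ is a product set imposing separate constraints on each coordinate while the coordinates of $\vect{W}$ are independent, each column also has independent entries of the stated form. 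Thus $\vect{M}=\vect{W}_{\vect{J}}$ meets the hypotheses of Lemma~\ref{lm:matrices}, so $\E\vect{W}_{\vect{J}}\vect{W}_{\vect{J}}^T$ and $\E((\vect{W}_{\vect{J}}\vect{W}_{\vect{J}}^T)^{-1})$ are scalar multiples of $\vect{I}_d$; matching their traces to the definitions of $\alpha$ and $\beta$ gives~\ref{lmi:itm1} and~\ref{lmi:itm2}.

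For~\ref{lmi:itm3} I would proceed in three steps. (a) For the left inequality, expand $d\alpha=\tr\E\vect{W}_{\vect{J}}\vect{W}_{\vect{J}}^T=\sum_{\ell,m}\E(\vect{W}_{\vect{J}})_{\ell m}^2$; there are $d$ diagonal terms equal to $\E(Z^2\mid|Z|>a)=1+a\,s(a)$ --- the same computation giving $\E X_J^2=1+ts(t)$ in the proof of Theorem~\ref{thrm:stEst} --- and $d(d-1)$ off-diagonal terms equal to $\E(Z^2\mid|Z|<b)\le 1$, so using $s(a)\le a+a^{-1}$ one gets $\alpha\le(1+a(a+a^{-1}))+(d-1)=a^2+d+1$, i.e.\ $(a^2+d+1)^{-1}\le\alpha^{-1}$. (b) For the middle inequality, note that for any positive definite $S$ and unit vector $\vect{v}$, Cauchy--Schwarz gives $\vect{v}^TS^{-1}\vect{v}\ge(\vect{v}^TS\vect{v})^{-1}$; taking $S=\vect{W}_{\vect{J}}\vect{W}_{\vect{J}}^T$, taking expectations, and applying Jensen's inequality to $x\mapsto x^{-1}$ yields $\beta=\vect{v}^T\E(S^{-1})\vect{v}\ge(\vect{v}^T\E(S)\vect{v})^{-1}=\alpha^{-1}$. (c) For the right inequality, apply Lemma~\ref{lm:johnson} to $B=\vect{W}_{\vect{J}}$: each diagonal entry has modulus exceeding $a$ and each of the $d-1$ off-diagonal entries in any row or column has modulus below $b$, so the bound yields $\sigma_{\min}(\vect{W}_{\vect{J}})\ge a-(d-1)b>0$ almost surely (this is where $a>(d-1)b$ is used); hence every eigenvalue of $(\vect{W}_{\vect{J}}\vect{W}_{\vect{J}}^T)^{-1}$ is at most $(a-(d-1)b)^{-2}$, so $\tr(\vect{W}_{\vect{J}}\vect{W}_{\vect{J}}^T)^{-1}\le d(a-(d-1)b)^{-2}$ surely, and dividing by $d$ and taking expectations gives $\beta\le(a-(d-1)b)^{-2}$.

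The hard part is really just the structural first step --- that the successive concomitant columns are independent and each has exactly the product conditional law --- which is a standard strong-Markov/renewal fact for i.i.d.\ sequences but needs to be stated carefully; once it is in place, items~\ref{lmi:itm1}--\ref{lmi:itm2} are immediate from Lemma~\ref{lm:matrices} and item~\ref{lmi:itm3} only reuses identities already derived for the scalar threshold estimator together with the two cited matrix lemmas.
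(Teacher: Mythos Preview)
Your proposal is correct and follows essentially the same route as the paper: both invoke Lemma~\ref{lm:matrices} after observing that $\vect{W}_{\vect{J}}$ has the required independent-entry structure, then for item~\ref{lmi:itm3} both use the moment computation $\alpha\le a^2+d+1$ together with $s(a)\le a+a^{-1}$, and both use Lemma~\ref{lm:johnson} to get $\sigma_{\min}(\vect{W}_{\vect{J}})\ge a-(d-1)b$ for the upper bound on $\beta$. The only cosmetic difference is in the middle inequality $\alpha^{-1}\le\beta$: the paper works with the eigenvalues, applying Jensen to $\sum\E(1/\lambda_\ell)\ge\sum 1/\E\lambda_\ell$ and then the AM--HM inequality, whereas you use the quadratic-form Cauchy--Schwarz inequality $\vect{v}^TS^{-1}\vect{v}\ge(\vect{v}^TS\vect{v})^{-1}$ followed by Jensen on the scalar $\vect{v}^TS\vect{v}$; both arguments are equally short and yield the same bound.
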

\begin{proof}
	Recall that the vectors $\vect{W}_i$ are i.i.d. across the time index $i$, and that the entries of each one are i.i.d. with a symmetric distribution. Taking into account the rectangular structure of the stopping sets $A^w_\ell$ we see that $\vect{W}_{\vect{J}}$ has independent entries where diagonal elements have one distribution, and off-diagonal elements follow another, symmetric distribution. Thus, the matrix $\vect{W}_{\vect{J}}$ satisfies the conditions of Lemma~\ref{lm:matrices}. This proves claim~\ref{lmi:itm1}~(which also holds trivially by construction)~and claim~\ref{lmi:itm2}. 
	
	We proceed to prove claim~\ref{lmi:itm3}. Denoting the singular values of $\vect{W}_{\vect{J}}$ by $\sqrt{\lambda_1} \ge \ldots \ge \sqrt{\lambda_d}$, we have that 
		\begin{align}
		\tr (\vect{W}_{\vect{J}}\vect{W}_{\vect{J}}^T)^{-1}
		&=\sum_{\ell \in  [d]} \lambda_{\ell}^{-1} \le d\lambda_d^{-1}.
		\end{align}
		By construction, the diagonal entries of $\vect{W}_{\vect{J}}$ are larger than $a$ in absolute value, and the off-diagonal entries are smaller than $b$ in absolute value. Therefore Lemma~\ref{lm:johnson} yields
		\begin{align}
		\sqrt{\lambda_d} \ge a - (d-1)b. 
		\end{align}
		We thus have 
		\begin{align}\label{eq:frobBnd}
		\beta d &=  \tr \beta \vect{I}_d = \tr \E  (\vect{W}_{\vect{J}} \vect{W}_{\vect{J}}^T)^{-1} \le  d(a - (d-1)b)^{-2},
		\end{align}
		which establishes the rightmost inequality in claim~\ref{lmi:itm3}. The middle inequality holds since
		\begin{align}
		\beta d &=  \sum_{\ell \in [d]} \E  \frac{1}{\lambda_{\ell}} 
		\ge  \sum_{\ell \in [d]}   \frac{1}{\E \lambda_{\ell}} \ge  \frac{d^2}{\sum_{\ell \in [d]} \E \lambda_{\ell}} \\
		&= \frac{d^2}{\tr \E \vect{W}_{\vect{J}} \vect{W}_{\vect{J}}^T} = \frac{d^2}{d \alpha},
		\end{align}
		where the two inequalities follow from Jensen's inequality applied to the function $1/x$. Note that the rows and columns of $\vect{W}_{\vect{J}}$ have the same distribution. Therefore
		\begin{align}
		\alpha &= \E \|\vect{W}_{I_1}\|^2 \\
		&= \E \left( \left.  (\vect{W})_1^2 \right|  \left|(\vect{W})_1 \right| > a \right) + 
		(d-1)\E \left( \left. (\vect{W})_2^2 \right| \left|(\vect{W})_2\right| < b \right)\\
		&= 1 + as(a) + (d-1) \E \left( \left. (\vect{W})_2^2 \right| \left|(\vect{W})_2\right| < b \right)\\
		&\le 1 + a(a+a^{-1}) + (d-1)1 \\
		&= a^2 + 1 + d
		\end{align}
		which completes the proof.
\end{proof}
Lemma~\ref{lm:auxilarry} and \eqref{eq:xVecCov} implies that the optimization problem~\eqref{eq:xVecOpt1}-\eqref{eq:xVecOpt2} can be written as
\begin{align}
&\textnormal{minimize } \beta \\ \label{eq:abConstr1}
&\textnormal{subject to }  h_g(2Q(a)(1-2Q(b))^{d-1}) = k_l.
\end{align}
Note that both $Q(\cdot)$ and $h_g(\cdot)$ are monotonically decreasing. Therefore from \eqref{eq:abConstr1} it is clear that increasing $k_l$ means increasing $a$ and/or decreasing $b$. From \eqref{eq:auxLemmBnds} we get that $\beta$ decreases as $a$ increases \emph{and} gets farther away from $b$. We conclude therefore that a reasonable approximation to the solution of the optimization problem above, for large $k_l$, ia as given in~\eqref{eq:approxab}. Note that the proposed approximated solution satisfies the constraint exactly. 
\begin{proposition}\label{pr:prop1}
	The estimator \eqref{eq:unrlzblEst} is unbiased and, for the choice of $a,b$ given in~\eqref{eq:approxab}, it satisfies
	\begin{align}
	\tr \Cov \hat{\Rho}_0 \le  \frac{1}{k_l} \left(\frac{d }{2 \ln 2} \min_{\ell \in [d]} \{1 - \rho^2_{\ell}\} + o(1) \right) 
	\end{align}
	where $k_l$ is the expected number of bits used to describe \emph{each} of the locations $J^w_1, \ldots, J^w_d$. Furthermore, $\hat{\Rho}_0$ is asymptotically efficient given $(\vect{W}_\vect{J}, Y_\vect{J})$.
\end{proposition}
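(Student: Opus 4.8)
The plan is to handle the variance bound and the asymptotic efficiency separately, reusing the algebraic facts already packaged in Lemmas~\ref{lm:matrices} and~\ref{lm:auxilarry} and the Gaussian tail asymptotics. Unbiasedness is immediate from the identity $\hat{\Rho}_0 = \Rho + \sigma Z_{\vect{J}} \vect{W}_{\vect{J}}^{-1}\Sxsq$ together with the independence of $Z_{\vect{J}}$ and $\vect{W}_{\vect{J}}$, which (as in \eqref{eq:xVecCov}) also gives $\Cov\hat{\Rho}_0 = \sigma^2 \Sxsq\, \E\!\big((\vect{W}_{\vect{J}}\vect{W}_{\vect{J}}^T)^{-1}\big)\Sxsq = \sigma^2\beta\,\Sx$ by Lemma~\ref{lm:auxilarry}\ref{lmi:itm2}. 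Since $\Sx$ has unit diagonal, $\E\|\hat{\Rho}_0-\Rho\|^2 = \tr\Cov\hat{\Rho}_0 = \sigma^2\beta d$, so the bound reduces to controlling the two scalars $\sigma^2$ and $\beta$. For $\sigma^2$, I would note that $\sigma^2 = \Var(Y\mid\vect{X})$ in the model \eqref{eq:twoTwoModel}, and that conditioning on fewer variables can only increase the conditional variance of a jointly Gaussian pair, so $\sigma^2 \le \Var(Y\mid (\vect{X})_\ell) = 1-\rho_\ell^2$ for every $\ell$, hence $\sigma^2 \le \min_{\ell}\{1-\rho_\ell^2\}$. For $\beta$, Lemma~\ref{lm:auxilarry}\ref{lmi:itm3} gives $\beta \le (a-(d-1)b)^{-2}$.

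It then remains to express $a$ in terms of $k_l$. Substituting the choice \eqref{eq:approxab} (with $b=b_0$ fixed) into the constraint $h_g\big(2Q(a)(1-2Q(b_0))^{d-1}\big) = k_l$ and using $a\to\infty$ as $k_l\to\infty$, we have $h_g(p) = -\log p\,(1+o(1))$; the constant factor $(1-2Q(b_0))^{d-1}$ only contributes an additive $O(1)$, so $k_l = -\log Q(a)\,(1+o(1)) = \tfrac{a^2}{2\ln 2}(1+o(1))$ by the standard estimate $-\ln Q(a) = \tfrac{a^2}{2}(1+o(1))$. Thus $a^2 = 2k_l\ln 2\,(1+o(1))$, whence $\beta \le \tfrac{1}{2k_l\ln 2}(1+o(1))$ and, combining with $\sigma^2\le\min_\ell\{1-\rho_\ell^2\}$, $\E\|\hat{\Rho}_0-\Rho\|^2 \le \tfrac{1}{k_l}\big(\tfrac{d}{2\ln 2}\min_\ell\{1-\rho_\ell^2\}+o(1)\big)$, which is the claimed inequality.

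For asymptotic efficiency I would compute $\fisher_{\vect{W}_{\vect{J}},Y_{\vect{J}}}$ through \eqref{eq:generalFisher2}. Conditioned on $\vect{W}_{\vect{J}}=\vect{w}$, the column vector $Y_{\vect{J}}^T$ is $\mathcal{N}\big(\vect{w}^T\Sxsqm\thetaBold,\ \sigma^2\vect{I}_d\big)$ with $\thetaBold=\Rho^T$ and $\sigma^2 = 1-\thetaBold^T\Sx^{-1}\thetaBold$. Plugging into \eqref{eq:generalFisher} produces a mean term $\tfrac{1}{\sigma^2}\Sxsqm\vect{w}\vect{w}^T\Sxsqm$ and, since $\partial\sigma^2/\partial\thetaBold = -2\Sx^{-1}\thetaBold$, a covariance term $\tfrac{2d}{\sigma^4}\Sx^{-1}\thetaBold\thetaBold^T\Sx^{-1}$ (rank one). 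Taking the expectation over $\vect{W}_{\vect{J}}$ and invoking Lemma~\ref{lm:auxilarry}\ref{lmi:itm1} ($\E\vect{W}_{\vect{J}}\vect{W}_{\vect{J}}^T = \alpha\vect{I}_d$) yields $\fisher_{\vect{W}_{\vect{J}},Y_{\vect{J}}} = \tfrac{\alpha}{\sigma^2}\Sx^{-1} + \tfrac{2d}{\sigma^4}\Sx^{-1}\thetaBold\thetaBold^T\Sx^{-1}$. Here the second term is $O(1)$ in $k_l$ while $\alpha = a^2(1+o(1)) = 2k_l\ln 2\,(1+o(1)) \to\infty$ (using $\alpha\le a^2+1+d$ and $\alpha\ge a\,s(a)\ge a^2$ from the proof of Lemma~\ref{lm:auxilarry}), so the rank-one perturbation is negligible and $\fisher_{\vect{W}_{\vect{J}},Y_{\vect{J}}}^{-1} = \tfrac{\sigma^2}{\alpha}\Sx\,(1+o(1))$. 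Finally, the sandwich $\alpha^{-1}\le\beta\le (a-(d-1)b)^{-2} = a^{-2}(1+o(1)) = \alpha^{-1}(1+o(1))$ (again with $b=b_0$ fixed) gives $\beta = \alpha^{-1}(1+o(1))$, so for any $\vect{v}$, $\Var(\vect{v}^T\hat{\Rho}_0^T) = \sigma^2\beta\,\vect{v}^T\Sx\vect{v} = (1+o(1))\tfrac{\sigma^2}{\alpha}\vect{v}^T\Sx\vect{v} = (1+o(1))\,\vect{v}^T\fisher_{\vect{W}_{\vect{J}},Y_{\vect{J}}}^{-1}\vect{v}$, which is asymptotic efficiency in the sense of the preliminaries.

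I expect the main obstacle to be the Fisher-information computation and, within it, the verification that the covariance-dependent rank-one term is asymptotically immaterial: one must be confident that it remains bounded (that is, that $\sigma^2$ is bounded away from $0$, the non-degenerate regime; the degenerate case $\sigma^2=0$ makes the left-hand side of \eqref{eq:xVectrCov} vanish and is handled trivially) while the mean term grows like $\tfrac{\alpha}{\sigma^2}\Sx^{-1}$, so that inverting $\fisher$ legitimately yields $\tfrac{\sigma^2}{\alpha}\Sx(1+o(1))$. Everything else is routine manipulation with the tail bounds already collected in Lemma~\ref{lm:auxilarry} and the asymptotics $h_g(p)=-\log p\,(1+o(1))$, $-\ln Q(a)=\tfrac{a^2}{2}(1+o(1))$.
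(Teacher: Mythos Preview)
Your proposal is correct and follows essentially the same route as the paper: the identification $\Cov\hat{\Rho}_0=\sigma^2\beta\Sx$, the bound $\sigma^2\le\min_\ell\{1-\rho_\ell^2\}$ via the conditional-variance/MMSE argument, the asymptotics $\alpha,\beta^{-1}=a^2(1+o(1))$ and $k_l=\tfrac{a^2}{2\ln 2}(1+o(1))$, and the Fisher information $\fisher_{\vect{W}_{\vect{J}},Y_{\vect{J}}}=\tfrac{\alpha}{\sigma^2}\Sx^{-1}+\tfrac{2d}{\sigma^4}\Sx^{-1}\Rho^T\Rho\Sx^{-1}$ all match. The only cosmetic difference is that the paper inverts $\fisher$ exactly via Sherman--Morrison before reading off the asymptotics, whereas you argue directly that the rank-one term is $O(1)$ and hence negligible against $\tfrac{\alpha}{\sigma^2}\Sx^{-1}$; both lead to $\fisher^{-1}=\tfrac{\sigma^2}{\alpha}\Sx(1+o(1))$.
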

\begin{proof}
	In light of Lemma~\ref{lm:auxilarry}, \eqref{eq:xVecCov} can be written as 
	\begin{align}
	\Cov \hat{\Rho}_0 = \beta \sigma^2 \Sx. 
	\end{align}
	Using \eqref{eq:generalFisher}-\eqref{eq:generalFisher2} with $\mu =  \Rho \Sx^{-\frac{1}{2}} \vect{W}_{\vect{J}}$ and $\Sigma = \sigma^2 \mathbf{I}_d$, we get that the Fisher information matrix of $(\vect{W}_{\vect{J}}, Y_{\vect{J}})$ is 
	\begin{align}
	\fisher_{\vect{W}_{\vect{J}} Y_{\vect{J}}} &= \frac{1}{\sigma^2} \Sxsqm \E \vect{W}_{\vect{J}} \vect{W}_{\vect{J}}^T \Sxsqm + \frac{2d}{\sigma^4} \Sx^{-1} \Rho^T \Rho \Sx^{-1}\\
	&= \frac{\alpha}{\sigma^2} \Sx^{-1} + \frac{2d}{\sigma^4} \Sx^{-1} \Rho^T \Rho \Sx^{-1},
	\end{align}
	and using the Sherman–Morrison formula (e.g. \cite{hager1989updating}) we get
	\begin{align}
	\fisher_{\vect{W}_{\vect{J}} Y_{\vect{J}}}^{-1}  = \frac{\sigma^2}{\alpha}\left(\Sx - \frac{2 d }{\alpha \sigma^2 + 2 d  (1-\sigma^2)} \Rho^T \Rho \right).
	\end{align}
	We take $a,b$ of~\eqref{eq:approxab}. Note that $b$ is fixed and that $a$ increases with $k_l$.
	From Lemma~\ref{lm:auxilarry} we have that 
	\begin{align}
	&\alpha^{-1} \textbf{}=  a^{-2}(1+o(1)),\quad \beta =  a^{-2}(1+o(1)),
	\end{align}
	which implies 
	\begin{align}
	&\Cov \hat{\Rho}_0 =  \frac{\sigma^2}{a^2}(1+o(1)) \Sx\\
	&\fisher_{\vect{W}_{\vect{J}} Y_{\vect{J}}}^{-1} =  \frac{\sigma^2}{a^2}(1+o(1)) \Sx
	\end{align}
	and thus $\hat{\Rho}_0$ is asymptotically efficient. 
	For large $k_l$ we have 
	\begin{align}
	k_l &= h_g(Q(a)2(1-2Q(b))^{d-1}) \\
	&= -\log(Q(a)2(1-2Q(b))^{d-1})(1+o(1))\\
	&= -\log(Q(a))(1+o(1))\\
	&= \frac{a^2}{2 \ln 2}(1+o(1))\\
	\end{align}
	and thus 
	\begin{align}
	\tr \Cov \hat{\Rho}_0 &= d \beta \sigma^2 = \frac{d \sigma^2}{a^2}(1+o(1))\\
	&= \frac{1}{k_l} \left( \frac{d \sigma^2}{2 \ln 2}+o(1)  \right).
	\end{align}
	It remains to show that
	\begin{align}\label{eq:twotworbnd}
	\sigma^2 \le \min\{1 - \rho_{\ell}^2\}. 
	\end{align}
	Note that $\sigma^2 = \Var (Y|\vect{X})$ is the MMSE of estimating $Y$ from $\vect{X}$ (see e.g.~\cite{kay1993fundamentals}). Therefore it is not greater than  $1-\rho_{\ell}^2 = \Var (Y| (\vect{X})_{\ell})$, which is the MMSE of estimating $Y$ from the $\ell$-th coordinate only. 
\end{proof}
Theorem~\ref{thm:xvec} now follows from Lemma~\ref{lm:lemma1} and Proposition~\ref{pr:prop1}.

\section{Non-Gaussian Families} \label{s:Apps}
In this section, we move beyond the Gaussian setup and consider the problem of distributed correlation estimation in more general families of distributions, based on our Gaussian constructions. For brevity of exposition, we limit our discussion to the scalar case; the results can be extended in an obvious way to the vector case. We note that in contrast to the Gaussian setting, the marginal distribution of $X$ or $Y$ in other families of distributions may depend on the correlation, in which case Alice or Bob could use their (unlimited) local measurements to improve their inference (and in some cases to even learn $\rho$ exactly without any communication). For example, if $X$ is uniformly distributed over the interval $[-\sqrt{3},\sqrt{3}]$, and $Y = \rho X+ \sqrt{1-\rho^2}Z$ where $Z$ is uniformly distributed over the discrete set $\{-1,1\}$, then it is clear that the distribution of $Y$, which can be determined with arbitrary accuracy by Bob, determines $\rho$ up to its sign, reducing our problem to a binary hypothesis testing one. Such scenarios render our method useless, or, at the very least, degenerate. 

Our interest, therefore, is in families of distributions where the marginals reveal little or nothing about the correlation. Specifically, we say that a family $\mathscr{F}$ of distributions on $(X,Y)$  is \emph{correlation-hiding} if each pair of marginals can be associated with an infinite number of possible correlations; namely, for any two marginals $p_X$ and $p_Y$ that are possible for some member of $\mathscr{F}$, there exists a countably infinite set $\mathscr{F}'\subseteq \mathscr{F}$ of joint distributions with marginals $p_X$ and $p_Y$, and with correlation coefficients that are all distinct. 

Below, we discuss two types of correlation-hiding families. The first is the family of all possible distributions (subject only to mild moment constraints), which is obviously correlation-hiding. We show that for this family, the Gaussian performance can be uniformly attained. The idea is very simple: we perform ``Gaussianization'' of the samples using the Central Limit Theorem (CLT), and then apply the Gaussian estimators; showing that this indeed works, however, is somewhat technically involved. The second type of families that we consider are ones where $p_X$ is known, and where $Y=\alpha X+Z$ for some unknown coefficient $\alpha$ and unknown independent noise $Z$. We show that such families are correlation-hiding, and that we can sometimes (depending on $p_X$) obtain a variance that decays much faster with $k$ than the Gaussian one. 

\subsection{Unknown Distributions}\label{ss:unknwnDists}
In this subsection, we consider the case where the joint distribution of $X$ and $Y$ is completely unknown, subject only to mild moment conditions. We show how the threshold method of Subsection~\ref{ss:sclrThresh} can be extended to this setup, using the CLT, to yield the same performance guarantees. The basic idea is to use the unlimited number of samples in order to create Gaussian r.v.s with the same correlation, by averaging over blocks of samples. Due to the CLT, it is intuitively clear that this approach works if Alice and Bob use infinite sized blocks. This is however impractical, and the main technical challenge is to show that using finite large enough blocks, i.e., changing the order of limits, still works. 

Let $(X,Y)$ be drawn from the family
\begin{align}\label{eq:general_family_def}
	\mathscr{F} = \{p_{XY} : \E X^2, \E Y^2 < u, \E Y^4<\infty\}. 
\end{align}
where $u$ is some known constant. Again, since we assume that local measurements are essentially unlimited, and the second moments have known upper bounds, we can assume without loss of generality that $\E X = \E Y = 0$, and $\E X^2 = \E Y^2 = 1$. The following claim is immediate from the fact that $\mathscr{F}$ contains in particular the Gaussian distributions.  
\begin{corollary}
	The family $\mathscr{F}$ in~\eqref{eq:general_family_def} is correlation-hiding. 
\end{corollary}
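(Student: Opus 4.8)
The plan is to exploit the fact that $\mathscr{F}$ imposes no structural constraint beyond mild moment bounds, so that any admissible pair of marginals can be coupled in countably many ways realizing distinct correlations. Under the normalization already in force ($\E X=\E Y=0$, $\E X^2=\E Y^2=1$), the correlation coefficient of a joint law is simply $\E XY$; hence it suffices to produce, for an arbitrary admissible pair $p_X,p_Y$, a countably infinite collection of couplings of $p_X$ and $p_Y$ with distinct values of $\E XY$, all still obeying the moment conditions defining $\mathscr{F}$.

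The construction I would use is a one-parameter family of mixtures. Fix admissible marginals $p_X,p_Y$, and let $q_{XY}$ be the comonotone (Fréchet--Hoeffding upper) coupling, obtained by setting $X=F_X^{-1}(U)$, $Y=F_Y^{-1}(U)$ for a common uniform $U$. Since $F_X^{-1}$ and $F_Y^{-1}$ are both nondecreasing, Chebyshev's integral inequality gives $\E_q XY=\int_0^1 F_X^{-1}(u)F_Y^{-1}(u)\,du\ge\big(\int_0^1 F_X^{-1}\big)\big(\int_0^1 F_Y^{-1}\big)=0$, with equality only if one marginal is degenerate; as both marginals have unit variance, the inequality is strict, so $q_{XY}$ has a correlation $\rho_0>0$. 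I would then set
\[
p^{(\lambda)}_{XY}=\lambda\,q_{XY}+(1-\lambda)\,(p_X\times p_Y),\qquad \lambda\in(0,1],
\]
and take $\mathscr{F}'=\{\,p^{(1/n)}_{XY}:n\in\mathbb{N}\,\}$.

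Three routine verifications complete the argument. First, mixing preserves marginals, so every $p^{(\lambda)}_{XY}$ has marginals $p_X$ and $p_Y$. Second, the defining conditions $\E X^2,\E Y^2<u$ and $\E Y^4<\infty$ depend only on the marginals, which are those of a member of $\mathscr{F}$, hence are inherited by each mixture; thus $\mathscr{F}'\subseteq\mathscr{F}$. Third, by linearity of expectation and independence in the second mixture component, $\E_{p^{(\lambda)}}XY=\lambda\rho_0+(1-\lambda)\cdot 0=\lambda\rho_0$, so the members of $\mathscr{F}'$ have correlation coefficients $\rho_0,\rho_0/2,\rho_0/3,\dots$, which are pairwise distinct, and $\mathscr{F}'$ is countably infinite. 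This shows $\mathscr{F}$ is correlation-hiding. I do not anticipate a genuine obstacle here; the only point deserving a word of justification is the strict positivity of $\rho_0$, which is exactly where non-degeneracy of the marginals (equivalently, the unit-variance normalization) enters.
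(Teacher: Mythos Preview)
Your argument is correct. The mixture-of-couplings construction works cleanly: the comonotone coupling has strictly positive correlation $\rho_0$ by the integral Chebyshev inequality (using non-degeneracy of both marginals), mixtures with the product measure preserve both marginals, and the correlation interpolates linearly, giving countably many distinct values. The moment constraints defining $\mathscr{F}$ are marginal-only, so membership is automatic.

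Your route is genuinely different from, and in fact more complete than, the paper's. The paper dispatches the corollary with the single remark that $\mathscr{F}$ ``contains in particular the Gaussian distributions.'' Read literally, that only verifies the correlation-hiding condition when the prescribed marginals $p_X,p_Y$ are themselves Gaussian (since then one can take bivariate Gaussians with any $\rho\in(-1,1)$); it does not address arbitrary admissible marginals, which is what the definition requires. Your Fr\'echet--Hoeffding/mixture argument handles every admissible pair uniformly. What the paper's remark buys is brevity and the intuition that $\mathscr{F}$ is ``rich''; what your construction buys is an argument that actually matches the quantifier in the definition. One small caveat: your opening line invokes ``the normalization already in force,'' but the correlation-hiding property is stated for $\mathscr{F}$ as defined, before normalization. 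This is harmless---your construction only needs both marginals to be non-degenerate, and for degenerate marginals the correlation coefficient is undefined anyway---but it would be cleaner to say so explicitly rather than lean on the normalization.
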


Let us now proceed to describe our estimator. Alice and Bob first locally sum over their measurements to create the new i.i.d. sequences $\{\bar{X}_i \}_i, \{\bar{Y}_i \}_i$, given by
\begin{align}
&\bar{X}_i = \frac{1}{\sqrt{m}} \sum_{j \in S_i} X_j, \quad \bar{Y}_i = \frac{1}{\sqrt{m}} \sum_{j \in S_i} Y_j
\end{align}
where the $S_i$'s are disjoint index sets of size $m$. For brevity, we suppress the dependence of these new r.v.s on $m$. The sequence of pairs $\{(\bar{X}_i, \bar{Y}_i)\}_i$ is clearly i.i.d. Denoting by $(\bar{X},\bar{Y})$ a generic pair in this sequence, the correlation between $\bar{X}$ and $\bar{Y}$ is clearly the same as the correlation between $X$ and $Y$. Alice and Bob can therefore apply the threshold method to the sequence $\{(\bar{X}_i, \bar{Y}_i)\}_i$ in order to estimate the original $\rho$. We now show that the performance of this estimator approaches the Gaussian performance as $m \rightarrow \infty$. Given a communication constraint of $k$ bits, the threshold $t$ is chosen (as in the Gaussian case) such that  $h_g(Q(t)) = k$.  We denote
\begin{align}
\bar{J} = \min \{i: \bar{X}_i > t\}
\end{align}
and the estimator
\begin{align}
\hat{\rho}_{\th}^{(m)} = \frac{\bar{Y}_{\bar{J}}}{s(t)},
\end{align}
where $s(t)$ is given in \eqref{eq:invMillsDef}. Note we cannot normalize by $\E \bar{X}_{\bar{J}}$ to get a strictly unbiased estimator since we assume unknown distributions and thus $\E \bar{X}_{\bar{J}}$ is not known for finite $m$.
The expected number of bits needed to describe $\bar{J}$ is
\begin{align}
k^{(m)} = h_g(\Pr(\bar{X} > t)).
\end{align}
\begin{remark}
	Note that practical scenarios would require the choice of some fixed $m$. Therefore, in cases where the support of $X$ is finite, we might get that $\Pr(\bar{X} > t) = 0$ which means Alice waits forever and the estimator is undefined. Therefore, while the distribution of $(X, Y)$ need not be known in general, such a practical scenario requires some knowledge regarding the support of $X$ in the form of a number $x$ such that $\Pr (X_i > x)>0$ (which must exist since $\E X = 0$). Then we can take $m > t^2/x^2$ to assure $\Pr(\bar{X} > t) > 0$.
\end{remark}
\begin{theorem} \label{thm:CLTgenrlz} Let $t = Q^{-1}(h_g^{-1}(k))$. Then for the family $\mathscr{F}$ in~\eqref{eq:general_family_def} it holds that $\lim_{m \rightarrow \infty} k^{(m)} = k$ and
	\begin{align}
	\lim_{m \rightarrow \infty} \E(\hat{\rho}_{\th}^{(m)} - \rho)^2 = \frac{1}{k} \left( \frac{1 - \rho^2}{ 2 \ln 2} + o(1)  \right)
	\end{align}
\end{theorem}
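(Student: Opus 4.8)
The plan is to show that the finite-block estimator $\hat\rho_{\th}^{(m)}$ converges, as $m\to\infty$, to the Gaussian threshold estimator $\hat\rho_{\th}$ of Theorem~\ref{thrm:stEst}, both in terms of the induced communication cost $k^{(m)}$ and in terms of the mean squared error. The key point is that $(\bar X,\bar Y)$ is a normalized sum of $m$ i.i.d.\ pairs with the {\em same} correlation $\rho$, so by the bivariate CLT $(\bar X,\bar Y)\Rightarrow (\tilde X,\tilde Y)$, a standard bivariate Gaussian pair with correlation $\rho$. The conclusion then follows from the exact non-asymptotic formula $\Var\hat\rho_{\th}=\frac{1}{s^2(t)}(1-\rho^2(s(t)-t)s(t))$ together with the choice $t=Q^{-1}(h_g^{-1}(k))$, which gives exactly $\frac1k(\frac{1-\rho^2}{2\ln2}+o(1))$.

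First I would establish the convergence of the communication cost: $\Pr(\bar X>t)\to Q(t)$ because $t$ is a continuity point of the limiting Gaussian CDF, and since $h_g$ is continuous away from zero, $k^{(m)}=h_g(\Pr(\bar X>t))\to h_g(Q(t))=k$. Next I would analyze the estimator itself. Write $\hat\rho_{\th}^{(m)}=\bar Y_{\bar J}/s(t)$; conditioned on the stopping event, $\bar Y_{\bar J}$ has the distribution of $\bar Y$ given $\bar X>t$. So the task reduces to showing $\E(\bar Y\mid \bar X>t)\to \E(\tilde Y\mid \tilde X>t)=\rho\, s(t)$ and $\E(\bar Y^2\mid\bar X>t)\to\E(\tilde Y^2\mid\tilde X>t)$, from which $\E\hat\rho_{\th}^{(m)}\to\rho$ and $\Var\hat\rho_{\th}^{(m)}\to\frac1{s^2(t)}(1-\rho^2(s(t)-t)s(t))$ follow, and then $m\to\infty$ followed by the $k$-asymptotics of the right-hand side finishes the proof. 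Here is where the fourth-moment hypothesis $\E Y^4<\infty$ (and $\E X^2,\E Y^2<\infty$) enters: weak convergence alone does not give convergence of conditional second moments, so I need uniform integrability. Concretely, $\bar Y^2\mathbf{1}\{\bar X>t\}\le \bar Y^2$, and $\{\bar Y^2\}_m$ is uniformly integrable because $\E\bar Y^4$ stays bounded in $m$ — this is a standard Marcinkiewicz–Zygmund / Rosenthal-type bound for sums of i.i.d.\ mean-zero variables with finite fourth moment, giving $\E\bar Y^4\le C(\E Y^4 + (\E Y^2)^2)$ uniformly in $m$. Combined with $\bar X^2$ being uniformly integrable (bounded second moment gives tightness, and we in fact only need weak convergence of the indicator), and with $\Pr(\bar X>t)\to Q(t)>0$ bounded away from zero, this upgrades the weak convergence of $(\bar X,\bar Y)$ to convergence of the relevant (truncated) first and second moments.

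The main obstacle is precisely this interchange-of-limits / uniform-integrability step: one must be careful that the conditioning event $\{\bar X>t\}$ behaves well, i.e.\ that $\mathbf1\{\bar X>t\}\to\mathbf1\{\tilde X>t\}$ in distribution jointly with $\bar Y$ and $\bar Y^2$, which requires that $t$ is a continuity point of the limit and that the families $\{\bar Y\,\mathbf1\{\bar X>t\}\}_m$ and $\{\bar Y^2\,\mathbf1\{\bar X>t\}\}_m$ are uniformly integrable; the latter is where $\E Y^4<\infty$ is used. Everything else — the explicit evaluation of the Gaussian conditional moments via the inverse Mills ratio $s(t)$, and the $t\sim\sqrt{2k\ln2}$ asymptotics — is already available from Theorem~\ref{thrm:stEst} and its proof. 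I would also remark that one can avoid re-deriving the Gaussian conditional moments by simply invoking that for the Gaussian limit the estimator coincides with $\hat\rho_{\th}$, whose variance is given in closed form just after Theorem~\ref{thrm:stEst}.
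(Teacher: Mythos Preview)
Your proposal is correct and follows essentially the same route as the paper: reduce to convergence of the first two moments of $\bar Y_{\bar J}$, establish convergence in distribution via the (bivariate) CLT and the fact that $\Pr(\bar X>t)\to Q(t)>0$, and upgrade to moment convergence via uniform integrability obtained from a uniform bound on $\E\bar Y^4$ using $\E Y^4<\infty$. The only cosmetic difference is that the paper computes $\E\bar Y^4=\tfrac{1}{m}\E Y^4+3\tfrac{m-1}{m}(\E Y^2)^2$ directly rather than citing a Rosenthal-type inequality, and it bounds $\E|\bar Y_{\bar J}|^4\le \E|\bar Y|^4/\Pr(\bar X>t)$ explicitly.
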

\begin{proof}
	Due to the CLT we have for any fixed $t>0$ that
	\begin{align}\label{eq:cltlim}
	\lim_{m \rightarrow \infty} \Pr(\bar{X} > t) = Q(t)
	\end{align}
	and thus, since $h_g$ is smooth, the communication constraint is asymptotically satisfied. We have
	\begin{align}
	&\E(\hat{\rho}_{\th}^{(m)} - \rho)^2 =  \frac{\E  \bar{Y}_{\bar{J}}^2  }{s^2(t)} - 2\rho  \frac{ \E \bar{Y}_{\bar{J}}}{s(t)} + \rho^2 
	\end{align}
	and thus it suffices to show that the first two moments of $\bar{Y}_{\bar{J}}$ converge to their values under the Gaussian distribution. Denoting by $(X^{\mathcal{N}},Y^{\mathcal{N}})$ and $Y^{\mathcal{N}}_{J}$ the associated r.v.s under a Gaussian distribution, it is enough to show that $\bar{Y}_{\bar{J}}$ converges in distribution to $Y^{\mathcal{N}}_{J}$ as $m\to \infty$, and that $\bar{Y}_{\bar{J}}^2$ is uniformly integrable \cite{billingsley1995probability}. To show convergence in distribution, observe that
	\begin{align}
	\lim_{m \rightarrow \infty} \Pr (\bar{Y}_{\bar{J}} > y) 
	&=\lim_{m \rightarrow \infty} \Pr (\bar{Y} > y| \bar{X} > t) \\
	&=\lim_{m \rightarrow \infty} \frac{\Pr (\bar{Y} > y, \bar{X} > t) }{\Pr(\bar{X} > t)} \\ \label{eq:genTrns1}
	&= \frac{\displaystyle \lim_{m \rightarrow \infty} \Pr (\bar{Y} > y, \bar{X} > t) }{\displaystyle \lim_{m \rightarrow \infty} \Pr(\bar{X} > t)} \\ \label{eq:genTrns2}
	&= \frac{ \Pr (Y^{\mathcal{N}} > y, X^{\mathcal{N}} > t) }{ \Pr(X^{\mathcal{N}} > t)} \\
	&= \Pr (Y^{\mathcal{N}} > y| X^{\mathcal{N}} > t) \\
	&= \Pr (Y^{\mathcal{N}}_{J} > y)
	\end{align}
	where \eqref{eq:genTrns1} holds since the denominator is not zero, and \eqref{eq:genTrns2} holds by virtue of the CLT.
	
	It follows from \eqref{eq:cltlim} that there exist some $m_0$ and $c>0$ (e.g., $c = Q(t)/2$) such that 
	\begin{align}
	\Pr(\bar{X} > t) > c \ \ \ \forall \ m \ge m_0,
	\end{align}
	and therefore we assume without loss of generality that $m\ge m_0$. To prove uniform integrability of $\bar{Y}_{\bar{J}}^2$ it suffices to show that $\sup_m \E |\bar{Y}_{\bar{J}}|^{\gamma} < \infty$ for some $\gamma>2$ \cite{billingsley1995probability}. For simplicity, we set $\gamma=4$:
	\begin{align}
	\E |\bar{Y}_{\bar{J}}|^4 &= \E (|\bar{Y}|^4 \mid  \bar{X} > t) \\
	&\le \frac{\E |\bar{Y}|^4 }{\Pr(\bar{X} > t)}\\
	&= \frac{\E (\frac{1}{\sqrt{m}} \sum_j Y_j)^4 }{\Pr(\bar{X} > t)}\\
	&= \frac{ \frac{1}{m} \E Y^4 + 3 \frac{m-1}{m} (\E Y^2)^2}{\Pr(\bar{X} > t)} \\
	&< \frac{1}{c} \left(\frac{\E Y^4 }{m}  + 3\right), 
	\end{align}
	which is finite since $\E Y^4< \infty$.
\end{proof}
\begin{example}[Doubly symmetric binary r.v.s]
	Consider the family of distributions where $X\sim \mathrm{Bernoulli}(1/2)$ and $Y=X\oplus Z$ where $Z\sim \mathrm{Bernoulli}(p)$ is independent of $X$, $p\in[0,1]$ is unknown, and $\oplus$ is the binary XOR operation. The associated Gaussian version of these r.v.s (after removing the mean) are the jointly normal, zero mean unit norm r.v.s $\bar{X}$ and $\bar{Y}$, with correlation $\rho=1-2p$. Our unbiased estimator can therefore obtain a variance of $\frac{1-(1-2p)^2}{2k\ln 2} = \frac{2p(1-p)}{k\ln{2}}$ for the estimation of $\rho$, which corresponds to a variance of $\frac{p(1-p)}{2k\ln{2}}$ for the estimation of $p$. This can be juxtaposed with the straightforward approach of simply sending $X_1,\ldots,X_k$ to Bob and applying the (efficient) estimator $\hat{p} = \frac{1}{k}\sum_{j=1}^k X_j\oplus Y_j$. This unbiased estimator has a variance of $\frac{p(1-p)}{k}$, which is interestingly slightly worse than what we got using the Gaussian approach. It may be possible to improve the former by using lossy compression, but we do not explore this direction here. 
\end{example}
\begin{remark}
Estimating the joint probability mass function of general discrete distributions on $X,Y$ can be similarly cast as a correlation estimation problem. However, the gain observed in the binary case above does not carry over to the general case. This is however not unexpected, since our estimator does not assume any bound on the cardinality of $X$ and $Y$. 
\end{remark}

\subsection{Additive Noise Families}\label{ss:genAdditive}
In this subsection, we consider a more restricted model where the distribution $p_X$ of $X$ is fixed (but not necessarily Gaussian) and has bounded variance, and where 
\begin{align}\label{eq:addtveChnlModel}
Y = \alpha X + Z 
\end{align}
for some unknown bounded constant $\alpha$, where $Z$ is an arbitrary r.v. with bounded variance that is independent\footnote{It is in fact sufficient for our purposes to assume only that $\E(Z | X)$ and $\Var(Z | X)$ do not depend on $X$} of $X$. Let us denote this family of distributions by $\mathscr{F}(p_X)$. First, we note:
\begin{corollary}\label{cor:cor-hid2}
	$\mathscr{F}(p_X)$ is correlation-hiding for any $p_X$.  
\end{corollary}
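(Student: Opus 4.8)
The plan is to reduce the statement to a claim about realizing coefficients. Fix a pair of marginals $(p_X,p_Y)$ realizable in $\mathscr{F}(p_X)$ (and assume $\Var X>0$, else the correlation is undefined). For any joint law in $\mathscr{F}(p_X)$ with these marginals, the model $Y=\alpha X+Z$ forces $\alpha=\Cov(X,Y)/\Var X$ — take $\Cov(\cdot,X)$ of both sides, noting that $\Cov(Z,X)=0$ since $\E(Z\mid X)$ is constant — so $\alpha$ is determined by the joint law, and the correlation coefficient is $\rho=\alpha\sqrt{\Var X/\Var Y}$, a fixed strictly monotone function of $\alpha$ alone. Hence it suffices to realize $p_Y$ (with $X\sim p_X$) by infinitely many distinct values of $\alpha$.

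I would then fix the given realization $Y_0:=\alpha_0 X+Z_0$ with $Z_0\perp X$, $\Var Z_0<\infty$, $\mathrm{Law}(Y_0)=p_Y$; normalize $\E X=\E Z_0=0$ and set $v_Y:=\Var Y_0=\alpha_0^2\Var X+\Var Z_0$. The trivial independent coupling of $p_X$ and $p_Y$ also lies in $\mathscr{F}(p_X)$, with coefficient $0$. The crux is to \emph{interpolate} between these two: for each $t\in[0,1]$ I want a joint law with the correct marginals in which $Z:=Y-t\alpha_0 X$ satisfies the relaxed condition from the footnote defining $\mathscr{F}(p_X)$ — $\E(Z\mid X)$ and $\Var(Z\mid X)$ independent of $X$ — so that the law belongs to $\mathscr{F}(p_X)$ with coefficient $t\alpha_0$. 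If $\alpha_0\neq 0$, letting $t$ run over $\{1/n\}_{n\geq 1}$ then produces a countably infinite subfamily with pairwise distinct correlations, the noise variances staying bounded in terms of $v_Y$ and $\Var Z_0$.

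Concretely I would prescribe the conditional law $p_{Y\mid X=x}$ to have conditional mean $t\alpha_0 x$ and the constant conditional variance $v_Y-(t\alpha_0)^2\Var X$ dictated by the law of total variance, arranged so that averaging over $x\sim p_X$ returns exactly $p_Y$. The main obstacle is precisely this marginal-preservation step. The obvious candidate — branching, with probability $t$, to the correlated coupling $Y=\alpha_0X+Z_0$ and, with probability $1-t$, to the independent coupling $Y\sim p_Y$ — does \emph{not} work: the two branches have conditional means $\alpha_0 x$ and $0$, so the branch randomness injects a term proportional to $x^2$ into $\Var(Y\mid X=x)$, violating the relaxed condition. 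The remedy is to choose the mixture components so that their conditional means agree as functions of $x$, or more generally to let only the \emph{shape} of $p_{Y\mid X=x}$ (not its first two moments) vary with $x$ so as to absorb the discrepancy; showing that such components exist as bona fide probability measures with the prescribed first two moments, for a completely arbitrary $p_X$, is the part that will need real care (and one should be alert that for rigid $p_Y$, e.g. a bounded $p_Y$ against an unbounded $p_X$, the only realization may be $\alpha_0=0$, in which case the interpolation argument must be supplemented or its scope narrowed accordingly).
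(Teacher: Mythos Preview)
Your proposal has a genuine gap that you yourself partly identify: the interpolation from $\alpha_0$ to $0$ via mixtures fails because the branch randomness makes $\Var(Y\mid X=x)$ depend on $x$, and your proposed remedy --- adjust only the ``shape'' of $p_{Y\mid X=x}$ while pinning its first two moments --- is a desideratum, not a construction. For a general target $p_Y$ there is no reason such conditional laws exist; indeed, as you observe, for bounded $p_Y$ against unbounded $p_X$ the \emph{only} realization in $\mathscr{F}(p_X)$ may be $\alpha=0$, so no interpolation can produce additional values. That is not a technicality to be patched later but a structural obstruction to the whole approach.

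The paper sidesteps all of this by not starting from an arbitrary realizable $p_Y$. It \emph{constructs} the noise as $Z=\sum_k\alpha_k Z_k$, an infinite weighted sum of i.i.d.\ copies $Z_k\sim p_X$ independent of $X$, with all weights distinct and $\sum_k\alpha_k^2=1$. Then $Y=\alpha X+\sum_k\alpha_k Z_k$ is a weighted sum of i.i.d.\ $p_X$-variables, whose law depends only on the multiset of weights $\{\alpha,\alpha_1,\alpha_2,\ldots\}$; swapping the role of $X$ with any $Z_k$ leaves both marginals unchanged while replacing the coefficient $\alpha$ by $\alpha_k$. This yields infinitely many distinct correlations in one stroke, with no conditional-distribution engineering. (Note that the paper's argument exhibits a specific $p_Y$ rather than handling every realizable pair of marginals; your bounded-$p_Y$ observation suggests the ``for any marginals'' clause of the definition may not hold literally for $\mathscr{F}(p_X)$, so the corollary is best read in this exhibited-witness sense.)
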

\begin{proof}
	See Appendix~\ref{apn:cor-hid2-proof}.
\end{proof}

We now show that the threshold estimator proposed for the Gaussian case applies to $\mathscr{F}(p_X)$ as well, and that its performance can be better or worse, depending on $p_X$. Specifically, we show that the $O(1/k)$ decay of the variance with the number of bits is not fundamental, as for some (heavier tailed) choices of $p_X$ we obtain a behavior of $O(1/k^2)$ using the same threshold estimator, and $2^{-\Omega(k)}$ using a slightly modified estimator. The latter is essentially the best possible using our approach, since we utilize $O(2^k)$ samples (with high probability), which corresponds to a variance of $\Omega(2^{-k})$ even in the centralized case. 

As in the previous sections, Alice and Bob can normalize their measurements locally. Therefore, we can assume without loss of generality that~\eqref{eq:addtveChnlModel} can be written as
\begin{align}\label{eq:genAddtvModel}
Y = \rho X + \sqrt{1 - \rho^2} Z
\end{align}
where $X$ and $Z$ are independent, zero mean unit variance r.v.s, and the correlation is $\rho = \E XY$. We assume that $p_Z$ is arbitrary and unknown, and that $p_X$ is arbitrary but known. Applying the threshold method of Subsection~\ref{ss:sclrThresh} to this non-Gaussian setup, we denote as usual $J = \min \{i: X_i > t\}$ the first index to pass the threshold $t$, where $t$ is chosen such that $h_g (\Pr (X > t)) = k$. Our estimator is 
\begin{align}
\hat{\rho}_{\th} = \frac{Y_J}{\E X_J}.
\end{align}
The following claim is immediate. 
\begin{corollary}
	$\hat{\rho}_{\th}$ is unbiased, and 
	\begin{align}\label{eq:addChnlVar}
	\Var \hat{\rho}_{\th} 	&= \frac{\rho^2 \Var (X \mid X > t) + 1 - \rho^2}{(\E (X \mid X > t))^2}.
	\end{align}
\end{corollary}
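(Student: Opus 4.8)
The plan is to reduce the entire claim to two distributional facts about the stopped pair $(X_J,Z_J)$: that $X_J$ has the law of $X$ conditioned on $\{X>t\}$, and that $Z_J$ is independent of $X_J$ and has the same zero-mean, unit-variance law as $Z$. Both follow from the observation that $J=\min\{i:X_i>t\}$ is a stopping time for the filtration generated by $\{X_i\}$, together with the fact that the pairs $(X_i,Z_i)$ are i.i.d. with $X_i$ independent of $Z_i$. Once these facts are established, unbiasedness and the variance formula are a one-line computation from the model $Y_J=\rho X_J+\sqrt{1-\rho^2}\,Z_J$, which is why the statement is labelled immediate.

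First I would verify the distributional claims. For bounded test functions $f,g$, conditioning on the value of $J$ gives
\begin{align}
\E[f(X_J)g(Z_J)] = \sum_{j\ge 1}\Pr(J=j)\,\E\!\left[f(X_j)g(Z_j)\mid J=j\right].
\end{align}
The event $\{J=j\}=\{X_1\le t,\ldots,X_{j-1}\le t,\,X_j>t\}$ lies in $\sigma(X_1,\ldots,X_j)$, hence is independent of $Z_j$; and conditioned on $\{J=j\}$ the variable $X_j$ has the law of $X$ given $\{X>t\}$. Therefore each summand equals $\Pr(J=j)\,\E[f(X)\mid X>t]\,\E[g(Z)]$, and summing yields $\E[f(X_J)g(Z_J)] = \E[f(X)\mid X>t]\cdot\E[g(Z)]$. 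This simultaneously shows $X_J\sim(X\mid X>t)$, $Z_J\sim Z$, and $X_J\perp Z_J$. In particular $\E X_J=\E(X\mid X>t)$, a known and strictly positive constant for the threshold $t$ chosen here (since $\Pr(X>t)>0$ by the choice $h_g(\Pr(X>t))=k$), so the estimator is well defined.

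Then the corollary follows. Taking expectations in $Y_J=\rho X_J+\sqrt{1-\rho^2}\,Z_J$ and using $\E Z_J=\E Z=0$ gives $\E Y_J=\rho\,\E X_J$, hence $\E\hat{\rho}_{\th}=\E Y_J/\E X_J=\rho$. For the variance, the independence of $X_J$ and $Z_J$ gives
\begin{align}
\Var\hat{\rho}_{\th} = \frac{\Var\!\left(\rho X_J+\sqrt{1-\rho^2}\,Z_J\right)}{(\E X_J)^2} = \frac{\rho^2\Var X_J + (1-\rho^2)\Var Z_J}{(\E X_J)^2},
\end{align}
and substituting $\Var X_J=\Var(X\mid X>t)$, $\Var Z_J=\Var Z=1$, and $(\E X_J)^2=(\E(X\mid X>t))^2$ produces the stated expression.

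The only step requiring any care is the stopping-time argument in the second paragraph; everything else is routine. It is worth noting that this argument uses nothing Gaussian about $X$ or $Z$ — only mutual independence and the first two moments — which is precisely why the same threshold estimator, and its exact (non-asymptotic) variance formula, transfer verbatim from the Gaussian setup of Subsection~\ref{ss:sclrThresh} to the additive family $\mathscr{F}(p_X)$; the subsequent analysis then amounts to choosing $p_X$ so that $\Var(X\mid X>t)$ and $\E(X\mid X>t)$ behave favorably as $t$ (equivalently $k$) grows.
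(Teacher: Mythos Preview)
Your proof is correct. The paper itself gives no proof here --- it simply states ``The following claim is immediate'' --- and your argument spells out exactly the implicit reasoning: identify the law of $(X_J,Z_J)$ via the stopping-time structure, then read off the mean and variance from the model $Y_J=\rho X_J+\sqrt{1-\rho^2}\,Z_J$. There is nothing to compare; you have supplied the details the paper omits.
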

Let us compute ~\eqref{eq:addChnlVar} for some specific choices of $p_X$.
\begin{example}[Laplace Distribution]
Let $p_X$ be a zero-mean, unit-variance Laplace distribution, hence $\Pr (X > x) = \frac{1}{2} e^{-\sqrt{2} x}$ for $x>0$. Thus,
\begin{align}
&\E (X \mid X >t) = t + \frac{1}{\sqrt{2}}, \quad \Var (X \mid X > t) = \frac{1}{2},
\end{align}
and
\begin{align}
k 
&= h_g \left( \frac{1}{2} e^{-\sqrt{2} t} \right)\\
&= - \log \left( \frac{1}{2} e^{-\sqrt{2} t} \right) (1 + o(1))  \\
&= \sqrt{2} t \log (e) (1 + o(1)).
\end{align}
Therefore \eqref{eq:addChnlVar} becomes
\begin{align}
\Var \hat{\rho}_{\th} 
= \frac{ 1}{  k^2} \left( \frac{2 -  \rho^2}{(\ln 2)^2}   +o(1) \right),
\end{align}
which yields a variance of $O(1/k^2)$, in contrast to the slower $O(1/k)$ attained in the Gaussian case.
\end{example}

\begin{example}[Pareto Distribution]
Motivated by the Laplace example which indicates that a heavier tail of $X$ may yield a faster decay of $\Var \hat{\rho}_{\th}$, we investigate the heaviest tail possible with finite variance. Suppose that $p_X$ is the (double-sided, zero mean) Pareto distribution, i.e.,   
\begin{align}\label{eq:paertoCDF}
\Pr (X > x) = \Pr (X < -x) = \frac{1}{2} \left( \frac{x_0}{x} \right)^{\alpha}
\end{align}
for any $x > x_0$, where $\alpha > 2$ and $x_0 > 0$ is set such that $\Var X = 1$. Then for any $t>x_0$
\begin{align}
\E (X \mid X >t) \!=\! \frac{\alpha t}{\alpha - 1},  \quad \Var (X \mid X > t) \!=\! \frac{\alpha t^2}{(\alpha - 1)^2 (\alpha - 2)}
\end{align}
and \eqref{eq:addChnlVar} becomes
\begin{align}
\Var \hat{\rho}_{\th} = \frac{\rho^2}{\alpha (\alpha - 2)} + O(1/t^2).
\end{align}
Thus, the variance of our threshold estimator does not vanish with the number of bits. This flaw can nevertheless be fixed in a very strong way, as we show next. Before we proceed, we note that for $p_X$ with a tail of the form $\Pr (X>x) \propto e^{-x^{\frac{1}{m}}}$, i.e. in between Pareto and Laplace, the threshold estimator yields $\Var \hat{\rho}_{\th} = O(1/k^2)$ for any natural $m$. Also, tails that decay faster than Gaussian may yield worse performance, e.g. the tail $e^{-x^4}$  yields  $\Var \hat{\rho}_{\th} = O(1/\sqrt{k})$. 
\end{example}

Getting back to the double-sided Pareto distribution, recall that in the Gaussian case it was shown that describing the value of $X_J$ does not improve estimation performance. This was due to the fact that for the Gaussian family, $\Var X_J = \Var (X \mid X > t) \rightarrow 0$. This is however not true in general; in fact, the Pareto distribution is an extreme case in which  $\Var (X \mid X > t) \rightarrow \infty$. Therefore, providing some information regarding the value of $X_J$ at the expense of the number of bits used to describe the index $J$, might improve performance. With that in mind, we consider the estimator
\begin{align}\label{eq:thrshldestQntzd}
\hat{\rho}_{\thq} = \frac{Y_J}{\hat{X}_J}
\end{align}
that allocates $k_l$ bits to describe $J$, and $k_q$ bits to describe the value of $\hat{X}_J$, where $k_l + k_q = k$. We apply the following simple quantizer. For some $u>t$ we divide the region $[t, u]$ to $2^{k_q}$ equal segments of length $\Delta = 2^{-k_q}(u-t)$. For $x>u$ we set $\hat{x} = u$. In the following, we show that this estimator attains a variance that decays exponentially fast with $k$. 
\begin{proposition} \label{pr:paretoQntz} 
	Consider the family $\mathscr{F}(p_X)$ where $p_X$ be the double-sided Pareto distribution. Then the estimator $\hat{\rho}_{\thq}$ in~\eqref{eq:thrshldestQntzd} satisfies
	\begin{align}
	\E (\hat{\rho}_{\thq} - \rho)^2 
	\le  (1 + \rho^2)\cdot 2^{-\frac{2 }{\alpha}  \frac{\alpha-2}{\alpha-1}k(1 - o(1))},
	\end{align}	
	where $k$ is the average number of transmitted bits.
\end{proposition}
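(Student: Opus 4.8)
The plan is to show that $\E(\hat{\rho}_{\thq}-\rho)^2\le(1+\rho^2)/t^2$ up to lower-order factors, and then to pick the thresholds $t,u$ and split the bit budget $k=k_l+k_q$ between the index $J$ and the quantized value $\hat{X}_J$ so that $1/t^2$ decays exponentially in $k$ at the asserted rate. For the first part I would decompose the error: since $J=\min\{i:X_i>t\}$ depends only on Alice's samples, conditioning on $X_J$ (hence on $\hat{X}_J$) leaves $Z_J$ with conditional mean $0$ and conditional variance $1$ by the i.i.d.\ structure of the pairs $(X_i,Z_i)$. Writing $Y_J=\rho X_J+\sqrt{1-\rho^2}Z_J$ gives $\hat{\rho}_{\thq}-\rho=(Y_J-\rho\hat{X}_J)/\hat{X}_J$, and since the quantizer's output always lies in $[t,u]$ we have $\hat{X}_J\ge t$, so $(\hat{\rho}_{\thq}-\rho)^2\le(Y_J-\rho\hat{X}_J)^2/t^2$; taking the expectation over $Z_J$ and then over $X_J$ yields
\begin{align}
\E(\hat{\rho}_{\thq}-\rho)^2\le\frac{1}{t^2}\left(\rho^2\,\E(X_J-\hat{X}_J)^2+1-\rho^2\right).
\end{align}

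Next I would bound $\E(X_J-\hat{X}_J)^2$ by splitting on $\{X_J\le u\}$ and $\{X_J>u\}$: on the first event the error is at most $\Delta=2^{-k_q}(u-t)$, and on the second $\hat{X}_J=u$ with $(X_J-u)^2\le X_J^2$. Since $X_J$ is distributed as $X$ conditioned on $X>t$, which for the double-sided Pareto law is again Pareto with scale $t$ and shape $\alpha$, an elementary integral gives $\E[X_J^2\mathbf{1}\{X_J>u\}]=\tfrac{\alpha}{\alpha-2}t^{\alpha}u^{2-\alpha}$, so $\E(X_J-\hat{X}_J)^2\le\Delta^2+\tfrac{\alpha}{\alpha-2}t^{\alpha}u^{2-\alpha}$. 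I would then choose $u$ to be a large enough constant times $t^{\alpha/(\alpha-2)}$ so that $\tfrac{\alpha}{\alpha-2}t^{\alpha}u^{2-\alpha}\le1$, and $k_q$ just large enough that $\Delta\le1$, which forces $k_q=\tfrac{\alpha}{\alpha-2}\log t+O(1)$; then $\E(X_J-\hat{X}_J)^2\le2$ and the display above becomes $\E(\hat{\rho}_{\thq}-\rho)^2\le(1+\rho^2)/t^2$.

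It remains to express $t$ through $k$. The index $J$ is geometric with parameter $\Pr(X>t)=\tfrac12 x_0^{\alpha}t^{-\alpha}$, and since $h_g(p)=-\log p+O(1)$ for small $p$ (consistent with the stated $h_g(p)=-\log p\,(1+o(1))$), the communication cost of the index is $k_l=h_g(\Pr(X>t))=\alpha\log t+O(1)$, i.e.\ $\log t=\tfrac{k_l}{\alpha}+O(1)$. Substituting into $k_q=\tfrac{\alpha}{\alpha-2}\log t+O(1)=\tfrac{k_l}{\alpha-2}+O(1)$ and using $k=k_l+k_q$ gives $k_l=\tfrac{\alpha-2}{\alpha-1}k+O(1)$, so $1/t^2=2^{-2\log t}=2^{-\frac{2}{\alpha}\frac{\alpha-2}{\alpha-1}k(1-o(1))}$, which combined with the previous paragraph gives exactly the claimed bound.

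The main obstacle is not conceptual but the bookkeeping in the last two steps: $t$ is determined by $k_l=k-k_q$ while $k_q$ must be chosen large relative to $t$, so one has to resolve this (benign) fixed point and then verify that the various $O(1)$ overheads — the one-bit redundancy of the variable-length code for the geometric index, the additive constant in $h_g(p)=-\log p+O(1)$, and the constant hidden in $u=\Theta(t^{\alpha/(\alpha-2)})$ — all get absorbed into the $(1-o(1))$ in the exponent without affecting the leading rate. The conditional first- and second-moment facts about $Z_J$ (using that $J$ depends only on Alice's samples) and the Pareto tail integral are routine.
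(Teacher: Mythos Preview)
Your proposal is correct and follows essentially the same route as the paper: decompose the error using $\hat{X}_J\ge t$, split the quantization error on $\{X_J\le u\}$ versus $\{X_J>u\}$, choose $u$ of order $t^{\alpha/(\alpha-2)}$, and allocate $k_q\approx k/(\alpha-1)$, $k_l\approx(\alpha-2)k/(\alpha-1)$ to get $t=2^{\frac{k_l}{\alpha}(1-o(1))}$. The only notable difference is that on the overshoot event you bound $(X_J-u)^2\le X_J^2$ and absorb the resulting constant into the choice of $u$, whereas the paper computes $\E[(X-u)^2\mid X>u]=cu^2$ exactly; your version has the side benefit of working for all $\alpha>2$, while the paper's final step invokes $c<1$ and hence tacitly restricts to $\alpha>3$.
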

\begin{proof}
	See Appendix~\ref{apn:proofParetoProp}.
\end{proof}

\section{Conclusions}
We have discussed the problem of estimating the correlations between remotely observed random vectors with unlimited local samples, under one-way communication constraints. For the case where the vectors are jointly Gaussian, we provided simple constructive unbiased estimators for the correlations; our estimators attain the best known non-constructive Zhang-Berger upper bound on the variance in the scalar case, and use the local correlations to uniformly improve performance in the vector case, where the Zhang-Berger approach seems inapplicable. Loosely speaking, our approach is based on Alice scanning her the local observations and sending the index of suitably ``large'' samples that induce good signal-to-noise ratio for the estimation for Bob, who uses the corresponding samples on his end. We then showed that using the CLT, this approach can be applied to the case of estimating correlations for completely unknown distributions, with the exact same variance guarantees. While the Gaussian approach yields a variance that is inversely proportional to the expected number of transmitted bits, we show that for joint distributions generated via unknown fading channels with unknown additive noise, whose correlations cannot be estimated locally, a slightly modified estimator attains a variance decaying {\em exponentially fast} with the expected number of transmitted bits. It remains interesting to try and obtain lower bounds on the variance as a function of the number of bits and the richness of the family of distributions under consideration. We conjecture that the inversely proportional behavior of our Gaussian estimator is order-wise optimal in the Gaussian case, hence also for the case of unknown distributions. 

\appendix
\section{Appendix}
\subsection{Proof of Theorem~\ref{thm:yvec}}\label{apn:yVecthmProof}
The model can be written as (see e.g. \cite{rencher2003methods})
\begin{align}
\vect{Y} = \Rho X + \Sigma^{\frac{1}{2}} \vect{Z}
\end{align}
where $\vect{Z} \sim \mathcal{N}(\vect{0},\vect{I}_d)$ is independent of $X$, and $\Sigma = \Sigma_{\vect{Y}} - \Rho \Rho^T$.
We have $\hat{\Rho} = (\Rho X_J +  \Sigma^{\frac{1}{2}} \vect{Z}_J)/\E X_J$. Therefore $\E \hat{\Rho} = \Rho$ and 
\begin{align}
\Cov \hat{\Rho} =  \frac{1}{(\E X_J)^2} \left(\Sigma +  \Var (X_J) \Rho \Rho^T \right).
\end{align}
Using \eqref{eq:generalFisher}-\eqref{eq:generalFisher2} with $\mu = \Rho X_J, \Sigma = \Sigma_{\vect{Y}} - \Rho \Rho^T$ we get that the Fisher information matrix pertaining to $(X_J,\vect{Y}_J)$ is 
\begin{align}
\fisher_{X_J \vect{Y}_J} = \Sigma^{-1}(\E X_J^2 + \Rho^T \Sigma^{-1} \Rho ) + \Sigma^{-1} \Rho \Rho^T \Sigma^{-1},
\end{align}
and applying the Sherman–Morrison formula (e.g. \cite{hager1989updating}) yields
\begin{align}
\fisher_{X_J \vect{Y}_J}^{-1} = \frac{1}{EX_J^2 + \Rho^T \Sigma^{-1} \Rho} \left(\Sigma - \frac{\Rho \Rho^T}{EX_J^2 + 2\Rho^T \Sigma^{-1} \Rho}  \right).
\end{align}
Using the arguments of Theorem~\ref{thrm:stEst} yields that both $\Cov \hat{\Rho}$ and  $\fisher_{X_J \vect{Y}_J}^{-1}$ are $\frac{1}{t^2} (1 + o(1) ) \Sigma$ and thus $\hat{\Rho}$ is asymptotically efficient. 
Theorem~\ref{thrm:stEst} also implies that $k = t^2 (2\ln 2 + o(1))$, and noting that $\tr \Sigma  = \tr (\Sigma_{\vect{Y}} - \Rho \Rho^T) = d - \|\Rho\|^2$ concludes the proof.

\subsection{The scalar method with linear transformations}\label{apn:SclrLinTrnsfsm}
In this subsection we show that  any  method based on $d$ scalar transmissions cannot uniformly beat the scheme of applying the basic scalar method $d$ times. In this sense, the joint method proposed in Theorem~\ref{thm:xvec} \emph{is superior} because it \emph{does} uniformly beat the simple scalar scheme. Specifically, let $M$ be some invertible $d \times d$ matrix known to both Alice and Bob, and let $\widetilde{\vect{X}} = M\vect{X}$. Suppose Alice and Bob apply the scalar method separately to obtain an estimator $\hat{\Rho}_M$ for the correlation vector $\Rho_M = \E Y \widetilde{\vect{X}}^T$, and then use the estimator $M^{-1}\hat{\Rho}_M$ to estimate $\Rho$. As it turns out, this family of estimators does not dominate the naive approach of estimating each correlation separately (i.e., $M= \vect{I}_d$).  
\begin{proposition}\label{pr:sclarNoGood}
	For any two invertible $d\times d$ matrices $M_1,M_2$ (that can arbitrarily depend on $\Sx$, and are known to both Alice and Bob), $M_1^{-1}\hat{\Rho}_{M_1}$ does not dominate $M_2^{-1}\hat{\Rho}_{M_2}$. 
\end{proposition}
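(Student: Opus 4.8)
The plan is to show that for each of the two fixed matrices $M_1,M_2$ one can exhibit a correlation vector $\Rho$ on which the corresponding estimator is strictly worse than the other, which immediately rules out domination in either direction. The starting point is an explicit expression for the covariance of $M^{-1}\hat{\Rho}_M$. Applying the scalar threshold method separately to the $d$ coordinates of $\widetilde{\vect X} = M\vect X$, with (say) an equal split of $k/d$ bits per coordinate, Theorem~\ref{thrm:stEst} (more precisely its vector consequence, as in the proof of Theorem~\ref{thm:yvec}) gives that $\hat{\Rho}_M$ is unbiased with a diagonal covariance whose $\ell$-th entry is asymptotically $\frac{d}{k}\cdot\frac{1}{2\ln 2}\,(\widetilde\sigma_\ell^2 + o(1))$, where $\widetilde\sigma_\ell^2 = \Var(Y\mid \widetilde X_\ell)$ is the conditional variance of $Y$ given the $\ell$-th transformed coordinate. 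Hence
\begin{align}
\tr\Cov\!\big(M^{-1}\hat{\Rho}_M\big) = \frac{d}{k}\cdot\frac{1}{2\ln2}\,\tr\!\Big(M^{-T} D_M M^{-1}\Big)(1+o(1)),
\end{align}
where $D_M = \mathrm{diag}(\widetilde\sigma_1^2,\ldots,\widetilde\sigma_d^2)$ and each $\widetilde\sigma_\ell^2$ is a simple explicit function of $\Rho$, $\Sx$ and $M$ (namely the residual variance after regressing $Y$ on the single coordinate $(M\vect X)_\ell$). The whole problem thus reduces to comparing the two quadratic-in-$\Rho$ quantities $f_i(\Rho) \triangleq \tr(M_i^{-T} D_{M_i} M_i^{-1})$ for $i=1,2$.

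The first key step is to show $f_1$ and $f_2$ cannot satisfy $f_1\le f_2$ identically (hence by symmetry neither ordering can hold), unless $M_1$ and $M_2$ induce literally the same estimator, i.e.\ $M_1$ is a coordinate permutation composed with a diagonal scaling of $M_2$. I would argue this by evaluating at well-chosen $\Rho$. The cleanest choice is $\Rho = \vect 0$, for which $\widetilde\sigma_\ell^2 = 1$ for all $\ell$ and every $M$, so $f_i(\vect 0) = \tr\big((M_i M_i^T)^{-1}\big)$; thus if the estimators are to be comparable at $\Rho=\vect 0$ we need $\tr((M_1M_1^T)^{-1}) = \tr((M_2M_2^T)^{-1})$, already a nontrivial constraint. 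Then I would perturb: for small $\Rho$, expand $\widetilde\sigma_\ell^2 = 1 - (\text{normalized correlation of }Y\text{ with }(M\vect X)_\ell)^2$ to second order in $\Rho$, obtaining $f_i(\Rho) = f_i(\vect 0) - \Rho^T Q_i \Rho + o(\|\Rho\|^2)$ for an explicit PSD matrix $Q_i$ built from $M_i$ and $\Sx$. A domination relation $f_1\le f_2$ near the origin then forces $f_1(\vect 0)\le f_2(\vect 0)$ and, on the set where they are equal, $Q_1 \succeq Q_2$; combined with the matching-trace constraint one derives $\tr Q_1 = \tr Q_2$ together with $Q_1\succeq Q_2$, hence $Q_1 = Q_2$. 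Unwinding the definition of $Q_i$ shows this can only happen when the rows of $M_1$ are, up to scaling and permutation, the rows of $M_2$ — i.e.\ the two schemes coincide.

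The main obstacle I anticipate is the algebra of identifying $Q_i$ and extracting from $Q_1 = Q_2$ (plus the trace condition at $\Rho=\vect 0$) the rigidity statement that $M_1,M_2$ are the same up to permutation and diagonal scaling; this is a linear-algebra argument about when the ``whitened row directions'' $\{M_i^{-T}\Sxsq e_\ell\}_\ell$ of two matrices produce the same quadratic form, and it needs a careful but elementary argument (e.g.\ comparing the two families of rank-one summands and using that a sum of rank-one PSD matrices determines the lines if the coefficients are positive and the lines distinct). A cleaner alternative, which I would fall back on if the second-order analysis gets unwieldy, is to avoid the origin entirely: pick $\Rho$ supported on a single coordinate of $\widetilde{\vect X}^{(1)} = M_1\vect X$ so that $f_1(\Rho)$ stays small (one $\widetilde\sigma_\ell^2$ can be driven to $0$) while, for generic $M_2$, $\Rho$ spreads across all coordinates of $M_2\vect X$ and keeps $f_2(\Rho)$ bounded away from $0$; then swap the roles of $M_1$ and $M_2$. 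This directly produces, for each fixed pair, a point where the first beats the second and a point where the second beats the first, which is exactly the non-domination claim; the only thing to check is that the degenerate direction for one matrix is genuinely non-degenerate for the other, which holds precisely unless the matrices coincide in the sense above (and in that trivial case ``does not dominate'' holds vacuously).
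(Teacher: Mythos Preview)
Your approach differs substantially from the paper's, and both of your proposed routes contain gaps.

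The paper's argument is much more elementary: it reduces to $d=2$ and, rather than working near $\Rho = \vect 0$, goes directly to the boundary of the parameter set. Taking $\rho_1,\rho_2$ (arbitrarily close to) $\pm 1$ drives the baseline scalar variance $\tr\Cov\hat\Rho_{\textnormal{scl}}$ to zero, since each factor $1-\rho_\ell^2$ vanishes; one then checks that for a generic transform the transformed correlations $\alpha_\ell$ are not all $\pm 1$, so $\tr\Cov\hat\Rho_{\textnormal{trn}}>0$, and a sign flip of $\rho_2$ disposes of the degenerate case $\alpha_1^2=\alpha_2^2=1$. No second-order expansion, no quadratic-form rigidity argument.

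In your first route, the step ``combined with the matching-trace constraint one derives $\tr Q_1 = \tr Q_2$'' is unjustified. Assuming for contradiction that $f_1\le f_2$ everywhere gives only $f_1(\vect 0)\le f_2(\vect 0)$; if this inequality is strict, the second-order expansion tells you nothing about $Q_1$ versus $Q_2$, and you have not ruled out $f_1\le f_2$ globally. You appear to be using $f_1\le f_2$ and $f_2\le f_1$ simultaneously, but the proposition requires you to refute each domination separately. Even granting $Q_1=Q_2$, the rigidity claim that a sum of positive rank-one terms determines the underlying lines is false without extra hypotheses (a PSD matrix admits many rank-one decompositions), so that step would also need substantial work.

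In your fallback, driving a \emph{single} $\widetilde\sigma_\ell^2$ to zero does not make $f_1(\Rho)=\tr(M_1^{-1}D_{M_1}M_1^{-T})$ small: the remaining $d-1$ diagonal entries of $D_{M_1}$ stay of order one, each multiplied by a strictly positive column-norm of $M_1^{-1}$, so $f_1$ is bounded away from zero. To beat $f_2$ you would need \emph{all} the transformed residual variances for $M_1$ to vanish simultaneously, which is exactly the paper's idea of sending every $\rho_\ell$ (respectively every $\alpha_\ell$) to $\pm 1$. That is the missing ingredient.
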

\begin{proof}
We need to show that any linear transformation applied to $\vect{X}$, followed by the scalar method, cannot be uniformly better than the scalar method itself. It suffices to show that for the two-dimensional case. 

Alice creates the following two scalar sequences.
\begin{align}
&U_i = [a_1, b_1] \vect{X}_i, \ \text{for} \ i = 1, \ldots, n_1 \ \text{and} \\
&V_i = [b_2, a_2] \vect{X}_i, \ \text{for} \ i = n_1+1, \ldots, n_1 + n_2
\end{align}
and allocates $k_1$ bits for $U$, and $k_2$ bits for $V$ (Note we can use either max or threshold method, and that $n_1,n_2$ can be arbitrarily large).
One special case of the above is the ``successive refinement'' approach described in the introduction (for $b_1 = 0$), and another special case is  the naive scalar method (for $a_1 = a_2 = 1$, $b_1 = b_2 = 0$ and $k_1 = k_2 = k/2$). Without loss of generality we assume $a_1, b_1$ are such that $\E U^2 = 1$, and $a_2, b_2$ are such that $\E V^2 = 1$. We denote
\begin{align}
&\alpha_1 = \E Y_i U_i = a_1 \rho_1 + b_1 \rho_2 \\
&\alpha_2 = \E Y_i V_i = b_2 \rho_1 + a_2 \rho_2,
\end{align}
and $\alphaBold = [\alpha_1, \alpha_2]^T$. We also denote $\Rho = [\rho_1, \rho_2]^T$ and
\begin{align}
M = \left[
\begin{matrix}
a_1 & b_1 \\
b_2 & a_2 
\end{matrix}
\right],
\end{align}
and therefore we have $\alphaBold = M \Rho$.
The best Bob can do (recall $U,V$ are independent) is to estimate $\alpha_1$ using $U$ and $\alpha_2$ using $V$ to obtain
\begin{align}
&\Var \hat{\alpha}_1 = \frac{1}{k_1} \left( \frac{1 - \alpha_1^2}{ 2 \ln 2} + o(1) \right)  \\
&\Var \hat{\alpha}_2 = \frac{1}{k_2} \left( \frac{1 - \alpha_2^2}{ 2 \ln 2} + o(1) \right) 
\end{align}
and then take $\hat{\Rho}_{\textnormal{trn}} = M^{-1} \hat{\alphaBold}$. The resulting sum of variances (note $\Cov(\hat{\alpha}_1, \hat{\alpha}_2)=0$)  is
\begin{align} 
\tr \Cov \hat{\Rho}_{\textnormal{trn}}  &= \tr  M^{-1} \Cov(\hat{\alpha})  M^{-T} \\
&= \tr  M^{-1} 
\left[
\begin{matrix}
\Var \hat{\alpha}_1 & 0 \\ 
0 & \Var \hat{\alpha}_2 
\end{matrix}
\right]
M^{-T}\\ 
&= \frac{(a_2^2 + b_2^2) \Var \hat{\alpha}_1 + (a_1^2 + b_1^2) \Var \hat{\alpha}_2 }{(a_1 a_2 - b_1 b_2)^2}
\end{align}
Applying the simple scalar method twice yields 
\begin{align}\label{eq:varScalarTwice}
\tr \Cov \hat{\Rho}_{\textnormal{scl}}  = \frac{1}{k} \left( 
\frac{k}{k'_1}  \frac{1 - \rho_1^2}{ 2 \ln 2} +  \frac{k}{k'_2}\frac{1 - \rho_2^2}{ 2 \ln 2} + o(1) 
\right).
\end{align}
with $k'_1 + k'_2 = k_1 + k_2 = k$.
We want to show that $\tr \Cov \hat{\Rho}_{\textnormal{trn}}$ cannot be uniformly better than $\tr \Cov \hat{\Rho}_{\textnormal{scl}}$, namely, show that for any choice of $a_1,b_1,a_2,b_2,k_1,k_2$ (that do not depend on $\rho_1, \rho_2$) we can find $\rho_1, \rho_2$ such that $\tr \Cov \hat{\Rho}_{\textnormal{scl}} < \tr \Cov \hat{\Rho}_{\textnormal{trn}}$. This is easy because we can always take $\rho_1,\rho_2 \in \{-1,1\}$ (or arbitrarily close to $\pm 1$) which makes $\tr \Cov \hat{\Rho}_{\textnormal{scl}} \approx 0$ and $\tr \Cov \hat{\Rho}_{\textnormal{trn}} \neq 0$. If $\tr \Cov \hat{\Rho}_{\textnormal{trn}} = 0$ (i.e. $\alpha_1^2=\alpha_2^2=1$), we can flip the sign of $\rho_2$ to obtain either $\alpha_1^2 \neq 1$ or $\alpha_2^2 \neq 1$.
\end{proof}

\subsection{Proof of Lemma~\ref{lm:lemma1}}\label{apn:lm1Proof}
Writing $\vect{W} = \vect{W}_{\vect{J}}$ and $\hat{\vect{W}} = \hat{\vect{W}}_{\vect{J}}$, we have
\begin{align}
&\hat{\Rho}_0 = Y_{\vect{J}} \vect{W}^{-1} \Sxsq = \Rho + \sigma Z_{\vect{J}} \vect{W}^{-1} \Sxsq\\
&\hat{\Rho} = Y_{\vect{J}} \hat{\vect{W}}^{-1} \Sxsq \!=\! \Rho \Sxsqm \vect{W} \hat{\vect{W}}^{-1} \Sxsq  \!+\! \sigma Z_{\vect{J}} \hat{\vect{W}}^{-1} \Sxsq\\
&\hat{\Rho}_0 - \Rho  = \sigma Z_{\vect{J}} \vect{W}^{-1} \Sxsq\\
&\hat{\Rho} - \hat{\Rho}_0 = \Rho \Sxsqm (\vect{W} \hat{\vect{W}}^{-1} - I) \Sxsq + \sigma Z_{\vect{J}} (\hat{\vect{W}}^{-1} - \vect{W}^{-1}) \Sxsq.
\end{align}
Recall $Z_{\vect{J}}$ is a row vector $\sim \mathcal{N}(0,\mathbf{I}_d)$ independent of $\vect{W}$. It follows that
\begin{align}
&\E \|\hat{\Rho} - \hat{\Rho}_0\|^2 = \E \|\Rho \Sxsqm (\vect{W} \hat{\vect{W}}^{-1} - I) \Sxsq \|^2  \\
& \ +\sigma ^2 \E \tr (\hat{\vect{W}}^{-1} - \vect{W}^{-1}) \Sx (\hat{\vect{W}}^{-T} - \vect{W}^{-T}), 
\end{align}
and
\begin{align}
&\E(\hat{\Rho} - \hat{\Rho}_0)(\hat{\Rho}_0 - \Rho)^T = \sigma ^2 \E \tr (\hat{\vect{W}}^{-1} - \vect{W}^{-1})\Sx    \vect{W}^{-T}.
\end{align}
Therefore
\begin{align}
&\E \|\hat{\Rho} - \Rho\|^2 = \E \|(\hat{\Rho} - \hat{\Rho}_0) + (\hat{\Rho}_0 - \Rho)\|^2\\
&= \E\|\hat{\Rho}_0 - \Rho\|^2 
+\E \|\hat{\Rho} - \hat{\Rho}_0\|^2  
+ 2\E(\hat{\Rho} - \hat{\Rho}_0)(\hat{\Rho}_0 - \Rho)^T\\
&= \E\|\hat{\Rho}_0 - \Rho\|^2  +\E \|\Rho \Sxsqm (\vect{W} \hat{\vect{W}}^{-1} - I) \Sxsq \|^2 \\
& \ +\sigma ^2 \E \tr (\hat{\vect{W}}^{-1} - \vect{W}^{-1}) \Sx (\hat{\vect{W}}^{-T} + \vect{W}^{-T}),
\end{align}
and thus
\begin{align}
&\E\|\hat{\Rho} - \Rho\|^2 -  \E\|\hat{\Rho}_0 - \Rho\|^2\\ \label{eq:trm1}
&= \E \|\Rho \Sxsqm (\vect{W} - \hat{\vect{W}})\hat{\vect{W}}^{-1} \Sxsq \|^2\\ \label{eq:trm2}
&\  + \sigma ^2 \E \tr (\vect{W} - \hat{\vect{W}})\hat{\vect{W}}^{-1} \Sx (\hat{\vect{W}}^{-T} + \vect{W}^{-T}) \vect{W}^{-1}.
\end{align}

Let us upper bound the two terms separately. Recall that by~\eqref{eq:frobBnd} we have $\|\vect{W}^{-1}\|_F^2 \le d/(a - (d-1)b)^2$, which also holds for $\hat{\vect{W}}^{-1}$. Furthermore, the assumption that $a > d(b + 1)$ implies that $a - (d-1)b$ is lower bounded by either $a/d$ or $d$.

First term \eqref{eq:trm1}: The Frobenius norm is sub-multiplicative (see e.g. \cite{horn1990matrix}), and therefore
\begin{align}
&\E \|\Rho \Sxsqm (\vect{W} - \hat{\vect{W}})\hat{\vect{W}}^{-1} \Sxsq \|_F^2\\ \label{eq:ftTrn1}
&\le \|\Rho \Sxsqm\|_F^2  \|\Sxsq \|_F^2  \E \|\vect{W} - \hat{\vect{W}}\|_F^2 \|\hat{\vect{W}}^{-1}\|_F^2 \\ \label{eq:ftTrn2}
&\le \frac{d \|\Rho \Sxsqm\|_F^2  \|\Sxsq \|_F^2}{(a - (d-1)b)^2}   \E \|\vect{W} - \hat{\vect{W}}\|_F^2\\ 
&\le \frac{d \|\Rho \Sxsqm\|_F^2  \|\Sxsq \|_F^2}{(a/d)^2}   \E \|\vect{W} - \hat{\vect{W}}\|_F^2\\ \label{eq:ftTrn3}
&= \frac{d^4 (1 - \sigma^2) }{a^2}   \E \|\vect{W} - \hat{\vect{W}}\|_F^2
\end{align}
 where for~\eqref{eq:ftTrn3} we used the fact that $\|\Rho \Sxsqm\|^2 = \Rho \Sx^{-1} \Rho^T = 1 - \sigma^2$, and $ \|\Sxsq \|_F^2 = \tr \Sx = d$.

Second term \eqref{eq:trm2}: For any two $d \times d$ matrices $A,B$, it can be easily shown that $\tr AB ^T \le d^2 \|A\|_F \|B\|_F$. Therefore, 
\begin{align}
&\sigma ^2 \E \tr (\vect{W} - \hat{\vect{W}})\hat{\vect{W}}^{-1} \Sx (\hat{\vect{W}}^{-T} + \vect{W}^{-T}) \vect{W}^{-1}\\
&\le \sigma ^2 d^2  \E  \|\vect{W} - \hat{\vect{W}}\|_F \|\hat{\vect{W}}^{-1} \Sx (\hat{\vect{W}}^{-T} + \vect{W}^{-T}) \vect{W}^{-1}\|_F\\ \label{eq:stTrn1}
&\le \!\sigma ^2 d^2  \!\sqrt{\E  \|\vect{W} \!-\! \hat{\vect{W}}\|_F^2} \sqrt{\E \|\hat{\vect{W}}^{-1} \Sx (\hat{\vect{W}}^{-T} \!+\! \vect{W}^{-T}) \vect{W}^{-1}\|_F^2}\\
&\le \sigma ^2 d^2  \frac{\sqrt{2} d^{\frac{3}{2}} \|\Sx\|_F }{(a - (d-1)b)^3} \sqrt{\E  \|\vect{W} - \hat{\vect{W}}\|_F^2} \\ \label{eq:stTrn2}
&\le   \frac{\sqrt{2} d^{4.5}  \sigma ^2}{(a/d)d^2} \sqrt{\E  \|\vect{W} - \hat{\vect{W}}\|_F^2}
\end{align}
where~\eqref{eq:stTrn1} is due to the Cauchy–Schwarz inequality. For~\eqref{eq:stTrn2} note that $\|\Sx\|_F^2 \le d^2$ because all the entries of $\Sx$ are less than or equal to one.

We now proceed to upper bound $ \E \|\vect{W} - \hat{\vect{W}}\|_F^2$. Consider the following uniform quantizer: The diagonal entries are truncated at some $c>a$. The double segment $\pm [a, c]$ is divided into $l_1$ regions of width $\epsilon_1 = 2(c-a)/l_1$ each. For $|w|>c$ we take $\hat{w} = \text{sign}(w)c$. Therefore
	\begin{align}
	&\E (W_{11} - \hat{W}_{11})^2 \\
	&= \frac{Q(c)}{Q(a)}\E \left( \left. (W_{11} - \hat{W}_{11})^2 \right| |W_{11}|>c \right)  \\
	& \ + \left(1- \frac{Q(c)}{Q(a)} \right)\E \left( \left. (W_{11} - \hat{W}_{11})^2 \right| |W_{11}|<c \right) \\
	&\le \frac{Q(c)}{Q(a)}\E \left( \left. (W_{11} - c)^2 \right| |W_{11}|>c \right) + \epsilon_1^2\\
	&= \frac{Q(c)}{Q(a)}(1 + cs(c) + c^2)  + \epsilon_1^2\\  \label{eq:qntzTrn}
	&\le \frac{a + a^{-1}}{c} e^{-\frac{c^2 - a^2}{2}} 2(1 + c^2) + \epsilon_1^2\\
	&\le 8c^2 e^{-\frac{c^2 - a^2}{2}}  + \epsilon_1^2
	\end{align}
	where \eqref{eq:qntzTrn} is obtained with some manipulations on $t \le s(t) \le t + t^{-1}$. For the off-diagonal entries, the segment $[-b, b]$ is divided into $l_2$ regions of width $\epsilon_2 = 2b/l_2$ each. Therefore
	\begin{align}
	&\E (W_{12} - \hat{W}_{12})^2 \le \epsilon_2^2.
	\end{align}
It follows that
\begin{align}
&\E \|\vect{W} - \hat{\vect{W}}\|_F^2\\
&= d \E (W_{11} - \hat{W}_{11})^2 + (d^2-d) \E (W_{12} - \hat{W}_{12})^2\\
&\le 8 d c^2 e^{-\frac{c^2 - a^2}{2}} + d \epsilon_1^2 + d(d-1) \epsilon_2^2\\
&\le 8 d c^2 e^{-\frac{c^2 - a^2}{2}}  + d^2 (\epsilon_1 + \epsilon_2)^2.
\end{align}
We take $c = \sqrt{3}a$ and $l_1 = l_2$ and thus $\epsilon_1 + \epsilon_2 = 2(c-a+b)/l_1 \le 4a/l_1$. The number of bits used for quantization is $k_q = \log l_1$ and therefore $\epsilon_1 + \epsilon_2 \le  4a 2^{-k_q}$. Now,
\begin{align}
&\sqrt{\E \|\vect{W} - \hat{\vect{W}}\|_F^2}\\
&\le \sqrt{ 24 d a^2 e^{-a^2}  + 16 d^2 a^2 2^{-2 k_q}}\\
&\le  \sqrt{(5 a d)^2 (e^{-a^2}  +  2^{-2 k_q}) }\\ \label{eq:finalBnd}
&\le 5 a d ( e^{-\frac{a^2}{2}}  +  2^{-k_q}),
\end{align}
and finally, combining~\eqref{eq:finalBnd} with~\eqref{eq:ftTrn3} and~\eqref{eq:stTrn2} yields 
\begin{align}
&\E\|\hat{\Rho} - \Rho\|^2 -  \E\|\hat{\Rho}_0 - \Rho\|^2\\ 
&\le 25 d^6 (1 - \sigma^2)    ( e^{-\frac{a^2}{2}}  +  2^{-k_q})^2 +  5\sqrt{2} d^{4.5}  \sigma ^2 ( e^{-\frac{a^2}{2}}  +  2^{-k_q})\\
&\le 25 d^6 ( e^{-\frac{a^2}{2}}  +  2^{-k_q})
\end{align}
which completes the proof.

\subsection{Proof of Lemma \ref{lm:matrices}}\label{apn:mtrcsProof}
Denote by $\mathcal{P}$  the set of all $d \times d$ signed permutation matrices, i.e. matrices with exactly one nonzero entry in every row and every column, that takes values in $\{-1,1\}$. For any $d \times d$ matrix $B$ and any  $P \in \mathcal{P}$, the matrix $PBP^T$ is obtained from $B$ by performing the same permutation on the rows and columns of $B$, with possible sign changes. Specifically, the diagonal of $P B P^T$ is a permutation of the diagonal of $B$, and the off-diagonal of  $P B P^T$ is a permutation of the off-diagonal entries of $B$ with possible sign changes. 

Suppose that the random matrix $\vect{N}$ has the same distribution as $P \vect{N} P^T$ for any $P \in \mathcal{P}$. It follows that  $\E \vect{N}$ must be a scalar multiple of the identity matrix since for any  $i \neq j$ there exist two matrices $P_1, P_2 \in \mathcal{P}$ such that 
\begin{enumerate}
	\item for some $i' \neq j'$,
	\begin{align}
	&(P_1 \vect{N} P_1^T)_{i'j'} = (\vect{N})_{ij}\\  
	&(P_2 \vect{N} P_2^T)_{i'j'} = -(\vect{N})_{ij}
	\end{align}
	and thus $(\E \vect{N})_{ij} = -(\E \vect{N})_{ij}$.
	\item for some $i'$,
	\begin{align}
	&(P_1 \vect{N} P_1^T)_{i'i'} = (\vect{N})_{ii}\\  
	&(P_2 \vect{N} P_2^T)_{i'i'} = (\vect{N})_{jj}
	\end{align}
	and thus $(\E \vect{N})_{ii} = (\E \vect{N})_{jj}$.
\end{enumerate}

The assumptions in the lemma imply that $\vect{M}$ and $P \vect{M} P^T$ have the same distribution for any $P \in \mathcal{P}$, thus $\vect{M} \vect{M}^T$ and $(P \vect{M} P^T)(P \vect{M} P^T)^T$ have the same distribution. Hence 
\begin{align}
(P \vect{M} P^T) (P \vect{M} P^T)^T = P \vect{M} \vect{M}^T P^T
\end{align}
 and thus $P \vect{M} \vect{M}^T P^T$ has the same distribution as $\vect{M} \vect{M}^T$. This implies that $(P \vect{M} \vect{M}^T P^T)^{-1}$ has the same distribution as $(\vect{M} \vect{M}^T)^{-1}$, and since 
\begin{align}
(P \vect{M} \vect{M}^T P^T)^{-1} = P (\vect{M} \vect{M}^T )^{-1} P^T
\end{align}
 we have that $P (\vect{M} \vect{M}^T )^{-1} P^T$ has the same distribution as $(\vect{M} \vect{M}^T )^{-1}$. Therefore  $\E (\vect{M} \vect{M}^T )^{-1}$ is a scalar multiple of the identity matrix.

\subsection{Proof of Proposition~\ref{pr:paretoQntz}}\label{apn:proofParetoProp}
For any distribution on $X$, and for any $u, \Delta$,
\begin{align}
\E &(\hat{\rho}_{\thq} - \rho)^2 
\!=\! \rho^2 \E \left( \frac{X_J - \hat{X}_J }{\hat{X}_J} \right)^2 \!+\! (1 - \rho^2) \E \frac{1}{\hat{X}_J^2} \\
&\le  \frac{\rho^2}{t^2} \E \left( X_J - \hat{X}_J  \right)^2 +   \frac{1 - \rho^2}{t^2} \\
&=   \frac{\rho^2}{t^2} \Pr (X_J < u) \E  \left(  \left. \left( X_J - \hat{X}_J  \right)^2 \right| X_J < u \right)\\ 
\nonumber & \quad + \frac{\rho^2}{t^2} \Pr (X_J > u) \E  \left( \left. \left( X_J - u  \right)^2 \right| X_J > u \right)
+ \frac{1 - \rho^2}{t^2} \\
&\le   \frac{\rho^2}{t^2}  \Delta^2
+ \frac{\rho^2}{t^2} \frac{\Pr (X > u)}{\Pr (X > t)} \E \left( \left. \left( X - u  \right)^2 \right| X > u \right)
+ \frac{1 - \rho^2}{t^2}.
\end{align}
In this Pareto example we have 
\begin{align}
\E \left( \left. \left( X - u  \right)^2 \right| X > u \right) = c u^2 
\end{align}
where $c = 2/((\alpha - 1)^2 (\alpha - 2))$, and thus
\begin{align}\label{eq:thrshQntzdErr}
\E (\hat{\rho}_{\thq} - \rho)^2 
&\le  \frac{1 - \rho^2 + \rho^2 \Delta^2}{t^2} +  c \rho^2 \left( \frac{t}{u} \right)^{\alpha-2} .
\end{align}
We take $u = t^{\frac{\alpha}{\alpha - 2}}$, and thus \eqref{eq:thrshQntzdErr} becomes
\begin{align}\label{eq:thrshQntzdErr1}
\E (\hat{\rho}_{\thq} - \rho)^2 
&\le  \frac{1 - \rho^2 + \rho^2 \Delta^2 + c \rho^2}{t^2}.
\end{align}
The bits are allocated by
\begin{align}
&k_q = \frac{1}{\alpha-1}k, \quad k_l = \frac{\alpha-2}{\alpha-1}k,
\end{align}
and the threshold $t$ is determined by $k_l$ from the solution of $k_l = h_g(\Pr(X>t))$, which yields $t = 2^{\frac{k_l}{\alpha} (1 - o(1))}$. We have $\Delta \le 1$ since
\begin{align}
\nonumber \Delta &= 2^{-k_q}(u-t)= 2^{-k_q}(t^{\frac{\alpha}{\alpha - 2}}-t)= 2^{-k_q} t^{\frac{\alpha}{\alpha - 2}} (1 - t^{\frac{-2}{\alpha - 2}})\\
\nonumber & = 2^{-\frac{k}{\alpha-1}}  2^{\frac{k}{\alpha-1}(1 - o(1))}   (1 - t^{\frac{-2}{\alpha - 2}}) = 2^{-o(k)}  (1 - t^{\frac{-2}{\alpha - 2}}).
\end{align}
Note that for $\alpha > 3$ we have $c<1$ and thus
\begin{align}
\E (\hat{\rho}_{\thq} - \rho)^2 
&\le  \frac{1 + \rho^2}{t^2} \le  \frac{1 + \rho^2}{2^{\frac{2 }{\alpha}  \frac{\alpha-2}{\alpha-1}k(1 - o(1))}}.
\end{align}

\subsection{Proof of Corollary~\ref{cor:cor-hid2}}\label{apn:cor-hid2-proof}
	Set any real-valued sequence $\{\alpha_k\}_{k=1}^\infty$ such that all the elements are distinct, and $\sum\alpha_k^2 =1$. Pick $Z = \sum_k \alpha_k Z_k$, where $Z_k\sim p_X$ are i.i.d and mutually independent of $X$. Then $Y = \alpha X  + \sum_k \alpha_k Z_k$ is a weighted sum of i.i.d. r.v.s, hence knowing $p_X$ and $p_Y$, or even knowing all the weights $\alpha, \{\alpha_k\}$, there is no way to distinguish between the case where $X$ has coefficient $\alpha$ and where $X$ has coefficient $\alpha_k$ for some $k$. Thus, there is an infinite number of possible correlations. 

\subsection{Maximum likelihood approximation}\label{apn:MLE}
In this section we provide further justification for the estimator $\hat{\Rho}= \vect{Y}_J/\E X_J$ (or $Y_J/\E X_J$ in the scalar setup which is a special case), by showing that $\vect{Y}_J/ X_J$ is an approximation of the maximum likelihood estimator. The model is 
\begin{align}
\vect{Y}_J = \Rho X_J + \Sigma^{\frac{1}{2}} \vect{Z}_J
\end{align}
where either $J = \argmax_i \{X_i\}$ if we use the max method, or, if we use the threshold method, $J = \min\{i:X_i>t\}$. We wish to maximize $f_{X_J \vect{Y}_J}$ which is equivalent to maximizing $f_{\vect{Y}_J|X_J}$, since $f_{X_J}$ does not depend on $\Rho$. It follows that the actual distribution of $X_J$ is irrelevant. We have $\vect{Y}_J|X_J \sim \mathcal{N}(\Rho X_J, \Sigma)$
with $\Sigma = \Sigma_{\vect{Y}} - \Rho \Rho^T$ and thus 
\begin{align}
&\frac{\partial}{\partial \Rho} \ln f_{X_J \vect{Y}_J} \\
\nonumber &= -\frac{1}{2}\frac{\partial}{\partial \Rho} \left( \ln \det \Sigma  + (\vect{Y}_J - \Rho X_J)^T \Sigma^{-1} (\vect{Y}_J - \Rho X_J) \right)\\
\nonumber &=\Sigma^{-1} \left( \Rho  -  (\vect{Y}_J - \Rho X_J) \left( (\vect{Y}_J - \Rho X_J)^T \Sigma^{-1} \Rho - X_J  \right)  \right)
\end{align}
meaning we want to solve
\begin{align}
\Rho =  (\vect{Y}_J - \Rho X_J) \left( (\vect{Y}_J - \Rho X_J)^T \Sigma^{-1} \Rho - X_J  \right).
\end{align}
Note that the rightmost term is a scalar and thus the solution must be of the form $\hat{\Rho}_{\textnormal{ML}} = C \vect{Y}_J$ where $C$ is a scalar that depends on $X_J$ and $\vect{Y}_J$. Plugging it yields that $C$ is obtained as the solution of the third degree polynomial
\begin{align}
\nonumber \vect{Y}_J^T\Sigma_{\vect{Y}}^{-1} \vect{Y}_J C^2 (X_J-C) \!-\! (X_J^2 \!-\! 1 \!+\! \vect{Y}_J^T\Sigma_{\vect{Y}}^{-1} \vect{Y}_J)C \!+\! X_J \!=\! 0.
\end{align}
In our setups $X_J$ takes large values. This implies in general that the entries of $\vect{Y}_J$ are also large, and thus $C$ should be small as we expect $\hat{\Rho}_{\textnormal{ML}} = C \vect{Y}_J$ to produce moderate values. Therefore we can assume that $X_J-C \approx X_J$ and that $X_J^2 - 1 \approx X_J^2$, which results in a quadratic equation in $C$ whose solutions are $X_J/(\vect{Y}_J^T\Sigma_{\vect{Y}}^{-1} \vect{Y}_J)$ and $1/X_J$. Note that (with either max or threshold) $\Var X_J$ approaches zero as the number of bits increases, and therefore the loss in replacing $X_J$ with $\E X_J$ is negligible (it is also evident in the optimality claims throughout where it is shown that the estimators, which do not use the actual value of $X_J$, achieve the CRLB that assumes $X_J$ is known).

\nocite{ourISITversion}

\bibliography{bibtex_refs}
\bibliographystyle{ieeetr}


\end{document}